\documentclass[a4paper, 11pt]{amsart}

\usepackage[T1]{fontenc}
\usepackage[english]{babel}
\usepackage[utf8x]{inputenc}
\usepackage{hyperref}
\usepackage{setspace}
\usepackage{bbm}
\usepackage[dvips]{graphicx}
\usepackage{lmodern}
\usepackage{paralist}
\usepackage{wrapfig}
\usepackage{float}
\usepackage[usenames,dvipsnames]{pstricks}
\usepackage{epsfig}
\usepackage{pst-grad} 
\usepackage{pst-plot} 
\usepackage{amsmath}
\usepackage{amssymb}
\usepackage{wasysym}
\newcommand{\INT}[2]{\int\limits_{#1}^{#2} } 
\renewcommand{\epsilon}{\varepsilon}
\renewcommand{\theta}{\vartheta}
\renewcommand{\phi}{\varphi}
\renewcommand{\leq}{\leqslant}
\renewcommand{\geq}{\geqslant}
\newcommand{\D}{\ \mathrm{d}}
\newcommand{\R}{\mathbb{R}}

\newcommand{\I}{\mathrm{i}}
\newcommand{\1}{\mathbbm{1}}
\renewcommand{\d}{\mathrm{d}}

\newcommand{\e}{\mathrm{e}}

\newcommand{\EXP}[1]{\exp\left(#1\right)}

\newcommand{\VAR}{\mathbb{V}}
\newcommand{\beweisende}{\hfill $\square$}
\DeclareMathOperator{\RE}{Re}

\newcommand{\Cov}{\mathop{\mathrm{Cov}}\nolimits}
\newtheorem{theorem}{Theorem}
\newtheorem{lemma}{Lemma}

\usepackage{amsthm}

\renewcommand{\P}[1]{\mathbb{P}\left(#1\right)} 

\newcommand{\N}{\mathbb{N}}
\newcommand{\E}{\mathbb{E}}

\newcommand{\IND}[1]{\1_{#1}}
\newcommand{\EW}[1]{\E\left[#1\right]}

\DeclareMathOperator{\sgn}{sgn}

\newcommand{\Z}{\mathbb{Z}}

\author{Hendrik Flasche}
\address{Hendrik Flasche, Institut f\"ur Mathematische Statistik,
Universit\"at M\"unster,
Orl\'eans--Ring 10,
48149 M\"unster, Germany}

\email{{h\_flas01@uni-muenster.de}}

\title[Real roots of random trigonometric polynomials]{Expected number of real roots of random trigonometric polynomials}

\begin{document}

\begin{abstract}
We investigate the asymptotics of the expected number of real roots of random trigonometric polynomials
$$
X_n(t)=u+\frac{1}{\sqrt{n}}\sum_{k=1}^n (A_k\cos(kt)+B_k\sin(kt)), \quad t\in [0,2\pi],\quad u\in\R
$$
whose coefficients $A_k, B_k$, $k\in\N$, are independent identically distributed random variables with zero mean and unit variance. If $N_n[a, b]$ denotes the number of real roots of $X_n$ in an interval $[a,b]\subseteq [0,2\pi]$, we prove that
$$
\lim_{n\rightarrow\infty} \frac{\E N_n[a,b]}{n}=\frac{b-a}{\pi\sqrt{3}}
\EXP{-\frac{u^2}{2}}.
$$
\end{abstract}
\maketitle

\section{Introduction}
\subsection{Main result}
In this paper we are interested in the number of real roots of a random trigonometric polynomial $X_n:[0,2\pi]\rightarrow \R$ defined as
\begin{equation}\label{eq:trigpolynomial}
X_n(t):=u+\frac{1}{\sqrt{n}}\sum_{k=1}^n (A_k\cos(kt)+B_k\sin(kt)),
\end{equation}
where $n\in \N$, $u\in\R$, and the coefficients $(A_k)_{k\in\N}$ and $(B_k)_{k\in \N}$  are independent identically distributed random variables with
\begin{equation}\label{eq:exp_var}
\E A_k = \E B_k = 0, \quad \E [A_k^2] = \E [B_k^2] =1.
\end{equation}
The random variable which counts the number of real roots of $X_n$ in an interval $[a,b]\subseteq [0,2\pi]$ is denoted by $N_n[a,b]$. By convention, the roots are counted with multiplicities and a root at $a$ or $b$ is counted with weight  $1/2$.
The main result of this paper is as follows.
\begin{theorem}\label{satz:hauptaussage}
Under assumption~\eqref{eq:exp_var} and for arbitrary $0\leq a<b\leq 2\pi$, the expected number of real roots of $X_n$ satisfies
\begin{equation}\label{eq:hauptaussage}
\lim_{n\rightarrow\infty} \frac{\E N_n[a,b]}{n}=\frac{b-a}{\pi\sqrt{3}}
\EXP{-\frac{u^2}{2}}.
\end{equation}
\end{theorem}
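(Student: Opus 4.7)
My plan is to apply a Kac-Rice type formula and combine it with a central limit theorem for the pair $(X_n(t), X_n'(t)/n)$. Smoothing the indicator of the zero set with a mollifier $\phi_\epsilon(x)=\epsilon^{-1}\phi(x/\epsilon)$ of the Dirac delta at $0$, one expects
\begin{equation*}
\E N_n[a,b] = \int_a^b \E\left[|X_n'(t)|\,\phi_\epsilon(X_n(t))\right]\D t + o_\epsilon(1),
\end{equation*}
with the error uniform in $n$. Justifying this identity requires an anti-concentration bound for $X_n(t)$ near $0$ together with a bound showing that multiple roots contribute negligibly on the $\epsilon$-scale.

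Direct computation using $\E A_k = \E B_k = 0$ and $\E A_k^2 = \E B_k^2 = 1$, together with the orthogonality of the trigonometric basis, gives
\begin{equation*}
\mathrm{Var}\, X_n(t) = 1,\quad \mathrm{Var}\, X_n'(t) = \frac{(n+1)(2n+1)}{6}\sim \frac{n^2}{3},\quad \Cov(X_n(t),X_n'(t)) = 0,
\end{equation*}
so the Lindeberg CLT yields $(X_n(t),X_n'(t)/n)\stackrel{d}{\longrightarrow}(G_1,G_2)$ with $G_1\sim N(u,1)$ and $G_2\sim N(0,1/3)$ independent. Upgrading this to convergence of the mixed expectation
\begin{equation*}
\frac{1}{n}\E\left[|X_n'(t)|\,\phi_\epsilon(X_n(t))\right] \longrightarrow \E|G_2|\cdot \E\phi_\epsilon(G_1)
\end{equation*}
follows from the uniform $L^2$ bound $\E[(X_n'(t)/n)^2]\leq 1/3+o(1)$, which supplies uniform integrability of $|X_n'(t)|/n$ against the bounded continuous weight $\phi_\epsilon$. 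Letting $\epsilon\to 0$ and using continuity of the density of $G_1$ produces the pointwise limit
\begin{equation*}
\E|G_2|\cdot p_{G_1}(0) = \sqrt{\frac{2}{3\pi}}\cdot\frac{1}{\sqrt{2\pi}}\EXP{-\frac{u^2}{2}} = \frac{1}{\pi\sqrt{3}}\EXP{-\frac{u^2}{2}}.
\end{equation*}
A final dominated convergence step in $t\in[a,b]$, with a uniform-in-$(t,n)$ bound on the integrand obtained via Cauchy-Schwarz and the above second-moment estimate, delivers the claimed asymptotics.

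The principal obstacle is making the double limit (in $n$ and then in $\epsilon$) rigorous for non-Gaussian coefficients: the joint law of $(X_n(t),X_n'(t))$ need not admit a density, so a direct local-CLT argument is unavailable. A standard remedy is to first convolve $X_n$ with a small independent Gaussian perturbation, which yields a smooth joint density and does not asymptotically affect the zero count, prove the limit there, and finally remove the perturbation by controlling its effect via moment bounds. Alternatively one may pass to characteristic functions, write $\phi_\epsilon$ through its Fourier transform, apply the CLT to the joint characteristic function of $(X_n(t),X_n'(t)/n)$, and justify interchanging Fourier inversion with expectation by a tightness estimate.
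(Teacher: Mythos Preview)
Your approach via a mollified Kac--Rice formula is genuinely different from the paper's.  The paper discretizes: it counts sign changes of $X_n$ on a lattice of mesh $\delta/n$, proves via Rolle's theorem and an anti-concentration bound (Lemmas~\ref{lemma:abschaetzungdmj}--\ref{lemma:ewuntnsternn}) that the expected discrepancy between roots and sign changes is $O(\delta^{4/3})$ per cell, computes the sign-change probability through a bivariate CLT for $(X_n(\alpha_n),X_n(\beta_n))$ with $\beta_n-\alpha_n=\delta/n$ (Lemma~\ref{lem:lim_S_n}), and lets $\delta\downarrow 0$ at the end.  Your infinitesimal pair $(X_n(t),X_n'(t)/n)$ is the natural limit of the paper's pair as $\delta\to 0$, and your Gaussian computation matches theirs.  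The advantage of the paper's route is that sign changes are detected by a bounded functional of $(X_n(\alpha_n),X_n(\beta_n))$, so the CLT alone suffices and no density is ever needed.

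The gap in your proposal is precisely the step you flag but do not carry out: the claim
\[
\E N_n[a,b]=\int_a^b\E\bigl[|X_n'(t)|\,\phi_\epsilon(X_n(t))\bigr]\D t+o_\epsilon(1)\quad\text{with }o_\epsilon(1)\text{ uniform in }n.
\]
Everything downstream of this line is correct (the Lindeberg CLT, uniform integrability of $|X_n'(t)|/n$ via the second-moment bound, dominated convergence in $t$, and the evaluation $\E|G_2|\,p_{G_1}(0)=\tfrac{1}{\pi\sqrt{3}}\e^{-u^2/2}$).  But by the co-area formula the mollified integral equals $\int_\R\phi_\epsilon(y)\,\E N_n^{=y}[a,b]\D y$, so the uniform $o_\epsilon(1)$ amounts to equicontinuity of $y\mapsto\E N_n^{=y}[a,b]$ at $y=0$ uniformly in $n$, i.e.\ a uniform bound ruling out near-double zeros.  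Neither of your proposed remedies sidesteps this: adding and then removing a small Gaussian perturbation still requires controlling how the perturbation changes the zero count, which again reduces to showing near-multiple zeros are rare; and the Fourier route needs integrability of the joint characteristic function of $(X_n(t),X_n'(t)/n)$ uniformly in $n$, which under a bare second-moment assumption is exactly a small-ball estimate of the type the paper proves in Lemma~\ref{lemma:abschaetzungdrei}.  In short, the hard analytic input of the paper (Lemmas~\ref{lemma:abschaetzungdmj}--\ref{lemma:abschaetzungdrei}) has no counterpart in your sketch, and without an analogue of it the interchange of the $n\to\infty$ and $\epsilon\to 0$ limits is unjustified.
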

The number of real roots of random trigonometric polynomials has been much studied in the case when the coefficients $A_k, B_k$ are Gaussian; see~\cite{dunnage}, \cite{das},  \cite{qualls}, \cite{wilkins}, \cite{farahmand1}, \cite{sambandham1}, to mention only few references, and the books~\cite{farahmand_book}, \cite{bharucha_reid_book}, where further references can be found. In particular, a proof of~\eqref{eq:hauptaussage} in the Gaussian case can be found in~\cite{dunnage}. Recently, a central limit theorem for the number of real roots was obtained in~\cite{granville_wigman} and then, by a different method employing Wiener chaos expansions, in~\cite{azais_leon}.  For random trigonometric polynomials involving only cosines, the asymptotics for the variance (again, only in the Gaussian case) was obtained in~\cite{su_shao}.

All references mentioned above rely heavily on the Gaussian assumption which allows for explicit computations. Much less is known when the coefficients are non-Gaussian. In the case when the coefficients are uniform on $[-1,1]$ and there are no terms involving the sine, an analogue of~\eqref{eq:hauptaussage} was obtained in~\cite{sambandham}. The case when the third moment of the coefficients is finite, has been studied in~\cite{sambandham_thangaraj}.  After the main part of this work was completed, we became aware of the work of Jamrom~\cite{jamrom} and a recent paper by Angst and Poly~\cite{angst_poly}.  Angst and Poly~\cite{angst_poly}  proved~\eqref{eq:hauptaussage} (with $u=0$) assuming that the coefficients $A_k$ and $B_k$ have finite $5$-th moment and satisfy certain Cram\'er-type condition. Although this condition is satisfied by some discrete probability distributions, it excludes the very natural case of $\pm1$-valued Bernoulli random variables. Another recent work by Aza\"is et.\ al.~\cite{azais_etal} studies the local distribution of zeros of random trigonometric polynomials and also involves conditions stronger than just the existence of the variance.  In the paper of Jamrom~\cite{jamrom}, Theorem~\ref{satz:hauptaussage} (and even its generalization to coefficients from an $\alpha$-stable domain of attraction) is stated without proof. Since full details of Jamrom's proof do not seem to be available and since there were at least three works following~\cite{jamrom} in which the result was established under more restrictive conditions (namely, \cite{sambandham}, \cite{sambandham_thangaraj}, \cite{angst_poly}), it seems of interest to provide a full proof of Theorem~\ref{satz:hauptaussage}.

\subsection{Method of proof}
The proof uses ideas introduced by Ibragimov and Maslova~\cite{ibragimov_maslova1} (see also the paper by Erd\"os and Offord~\cite{erdoes_offord}) who studied  the expected number of real zeros of a random algebraic polynomial of the form
\begin{equation*}
Q_n(t):=\sum_{k=1}^n A_k t^k.
\end{equation*}

For an interval $[a,b]\subset[0,2\pi]$ and $n\in\N$ we introduce the random variable $N_n^*[a,b]$ which is the indicator of a \emph{sign change} of $X_n$ on the endpoints of $[a,b]$ and is more precisely defined as follows:
\begin{equation}\label{eq:definitionnstern}
N_n^*[a,b]:=\frac{1}{2}-\frac{1}{2}\sgn(X_n(a)X_n(b))=\begin{cases}
0 &\textrm{if } X_n(a)X_n(b)>0, \\
1/2 &\textrm{if } X_n(a)X_n(b)=0, \\
1 &\textrm{if } X_n(a)X_n(b)<0.
\end{cases}
\end{equation}
The proof of Theorem \ref{satz:hauptaussage} consists of two main steps.

\vspace*{2mm}
\noindent
\textit{Step 1: Reduce the study of roots to the study of sign changes.}
Intuition tells us that $N_n[\alpha,\beta]$ and $N_n^*[\alpha,\beta]$ should not differ much if the interval $[\alpha,\beta]$ becomes small.  More concretely, one expects that the number of real zeros of $X_n$ on $[0,2\pi]$ should be of order $n$, hence the distance between consecutive roots should be of order $1/n$. This suggests that on an interval $[\alpha,\beta]$ of length $\delta n^{-1}$ (with small $\delta>0$) the event of having at least two roots  (or a root with multiplicity at least $2$) should be very unprobable. The corresponding estimate will be given in Lemma~\ref{lemma:abschaetzungdmj}. For this reason, it seems plausible that on intervals of length $\delta n^{-1}$ the events ``there is at least one root'', ``there is exactly one root'' and ``there is a sign change'' should almost coincide. A precise statement will be given in Lemma~\ref{lemma:ewuntnsternn}. This part of the proof relies heavily on the techniques introduced by Ibragimov and Maslova~\cite{ibragimov_maslova1} in the case of algebraic polynomials.

\vspace*{2mm}
\noindent
\textit{Step 2: Count sign changes.} 
We compute the limit of $\E N_n^*[\alpha_n,\beta_n]$ on an interval $[\alpha_n,\beta_n]$ of length $\delta n^{-1}$. This is done by establishing a bivariate central limit theorem stating that as $n\to\infty$ the random vector $(X_n(\alpha_n), X_n(\beta_n))$ converges in distribution to a Gaussian random vector with mean $(u,u)$, unit variance, and covariance $\delta^{-1}\sin\delta$. From this we conclude that $\E N_n^*[\alpha_n,\beta_n]$ converges to the probability of a sign change of this Gaussian vector. Approximating the interval $[a,b]$ by a lattice with mesh size $\delta n^{-1}$ and passing to the limits $n\to\infty$ and then $\delta\downarrow 0$ completes the proof. This part of the proof is much simpler than the corresponding argument of Ibragimov and Maslova~\cite{ibragimov_maslova1}.

\vspace*{2mm}
\noindent
\textit{Notation.}
The common characteristic function of the random variables $(A_k)_{k\in \N}$ and $(B_k)_{k\in\N}$ is denoted by
$$
\phi(t):=\E \EXP{\I t A_1}, \quad t\in\R.
$$
Due to the assumptions on the coefficients in~\eqref{eq:trigpolynomial},  we can write
\begin{equation}\label{eq:charfuncchar}
\phi(t)=\EXP{-\frac{t^2}{2}H(t)}
\end{equation}
for sufficiently small $|t|$, where $H$ is a continuous function with $H(0)=1$.

In what follows, $C$ denotes a generic positive constant which may change from line to line.

\section{Estimate for $\E N_n[a,b]-\E N_n^*[a,b]$ on small intervals}\label{abschnitt:unterschied}

In this section we investigate the expected difference between $N_n[\alpha,\beta]$ and $N_n^*[\alpha,\beta]$ on small intervals $[\alpha,\beta]$ of length $n^{-1} \delta $, where $\delta>0$ is fixed.

\subsection{Expectation and variance}
The following lemma will be frequently needed.
\begin{lemma}\label{lemma:varianzderableitung}
For $j\in\N_0$ let $X_n^{(j)}(t)$ denote the $j$th derivative of $X_n(t)$.
The expectation and the variance of $X_n^{(j)}$ are given by
\begin{equation*}
\E X_n^{(j)}(t)=
\begin{cases}
u ,&j=0,\\
0 ,&j\in\N,
\end{cases}
\quad \quad
\VAR{X^{(j)}_n(t)}
=
\frac{1}{n}\sum_{k=1}^n k^{2j}.
\end{equation*}
\end{lemma}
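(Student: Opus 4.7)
The plan is entirely computational and routine, exploiting the linearity of expectation and the pairwise orthogonality of the coefficients $A_k, B_k$ together with the Pythagorean identity $\cos^2 + \sin^2 = 1$.

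First I would differentiate the series termwise. Since $X_n$ is a finite sum, one gets the closed form
\begin{equation*}
X_n^{(j)}(t) = u\cdot \IND{j=0} + \frac{1}{\sqrt{n}}\sum_{k=1}^n k^j\Bigl(A_k \cos\bigl(kt + \tfrac{j\pi}{2}\bigr) + B_k \sin\bigl(kt + \tfrac{j\pi}{2}\bigr)\Bigr),
\end{equation*}
using the identities $\frac{d^j}{dt^j}\cos(kt) = k^j\cos(kt + j\pi/2)$ and $\frac{d^j}{dt^j}\sin(kt) = k^j\sin(kt + j\pi/2)$.

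For the expectation, linearity together with $\E A_k = \E B_k = 0$ kills every term of the sum, leaving only the constant $u$ when $j=0$ and $0$ otherwise. For the variance, the random variables $A_1,B_1,\dots,A_n,B_n$ are independent, so the variance of the sum is the sum of the variances of the individual summands. Using $\VAR A_k = \VAR B_k = 1$, we obtain
\begin{equation*}
\VAR X_n^{(j)}(t) = \frac{1}{n}\sum_{k=1}^n k^{2j}\Bigl(\cos^2\bigl(kt+\tfrac{j\pi}{2}\bigr) + \sin^2\bigl(kt+\tfrac{j\pi}{2}\bigr)\Bigr) = \frac{1}{n}\sum_{k=1}^n k^{2j},
\end{equation*}
which is the claimed formula.

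There is no real obstacle here; the only thing to be mildly careful about is that the deterministic shift $u$ contributes to the mean at $j=0$ but, being deterministic, does not affect the variance, and that the formula for the derivative is uniform in $j\in\N_0$ so a single calculation handles all cases simultaneously.
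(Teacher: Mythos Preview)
Your proof is correct and follows essentially the same route as the paper's: compute $X_n^{(j)}$ explicitly and read off mean and variance from $\E A_k=\E B_k=0$, $\VAR A_k=\VAR B_k=1$ together with $\cos^2+\sin^2=1$. The only cosmetic difference is that you write the $j$th derivative via the phase shift $kt+j\pi/2$, whereas the paper splits into the cases $j$ even and $j$ odd.
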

\begin{proof}
The $j$th derivative of $X_n$ reads as follows:
\begin{align*}
&\quad X_n^{(j)}(t)-u\IND{j=0} \\
&=\frac{1}{\sqrt{n}}\sum_{k=1}^n \left(A_k\frac{\d^j}{\d t^j}\cos(kt)
+B_k\frac{\d^j}{\d t^j}\sin(kt)\right) \\
&=
\frac{1}{\sqrt{n}}\sum_{k=1}^n k^j
\begin{cases}
(-1)^{j/2}A_k\cos(kt)+(-1)^{j/2}B_k\sin(kt), &\textrm{if }j \textrm{ is even,} \\
(-1)^{\frac{j-1}{2}}A_k\sin(kt)+(-1)^{\frac{j-1}{2}}B_k\cos(kt), &\textrm{if }j \textrm{ is odd.}
\end{cases}
\end{align*}
Recalling that $(A_k)_{k\in \N}$ and $(B_k)_{k\in\N}$ have zero mean and unit variance we immediately obtain the required formula.
\end{proof}

\subsection{Estimate for the probability that $X_n^{(j)}$ has many roots}\label{subsec:D_m_j}
Given any interval $[\alpha,\beta]\subset [0,2\pi]$, denote by $D^{(j)}_m=D^{(j)}_m(n; \alpha, \beta)$  the event that the $j$th derivative of $X_n(t)$ has at least $m$ roots in $[\alpha,\beta]$ (the roots are counted with their multiplicities and the roots on the boundary are counted without the weight $1/2$). Here, $j\in \N_0$ and $m\in\N$. A key element in our proofs is an estimate for the probability of this event presented in the next lemma.

\begin{lemma}\label{lemma:abschaetzungdmj}
Fix $j\in \N_0$ and $m\in\N$. For $\delta>0$ and $n\in\N$ let $[\alpha,\beta]\subset [0,2\pi]$ be any interval of length $\beta-\alpha=n^{-1}\delta$.
Then,
\begin{equation*}
\P{D_m^{(j)}} \leq  C(\delta^{(2/3)m}
+\delta^{-(1/3)m}n^{-(2j+1)/4}),
\end{equation*}
where  $C=C(j,m)>0$ is a constant independent of $n$, $\delta$, $\alpha$, $\beta$.
\end{lemma}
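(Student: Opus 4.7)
The plan has two parts, a deterministic reduction by iterated Rolle's theorem and a probabilistic estimate built from Chebyshev plus an anti-concentration (small-ball) bound for $X_n^{(j)}(\alpha)$.

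First, on the event $D_m^{(j)}$ the function $X_n^{(j)}$ has at least $m$ zeros (with multiplicity) in $[\alpha,\beta]$, so by an iterated Rolle count that respects multiplicities, $X_n^{(j+l)}$ has at least $m-l$ zeros in $[\alpha,\beta]$ for $l=0,\dots,m-1$; in particular each of these derivatives has at least one zero $\tau_l \in [\alpha,\beta]$. The identity $f(x) = \int_{\tau_l}^{x} f'(s)\,\mathrm{d}s$ applied with $f = X_n^{(j+l)}$ gives $M_n^{(j+l)} \le (\beta-\alpha)\, M_n^{(j+l+1)}$, where $M_n^{(j+l)} := \sup_{t \in [\alpha,\beta]} |X_n^{(j+l)}(t)|$, and iterating yields the deterministic inequality
\[
|X_n^{(j)}(\alpha)| \le (\beta-\alpha)^m M_n^{(j+m)} = \delta^m n^{-m} M_n^{(j+m)}.
\]
Therefore, for a parameter $K>0$ to be chosen later,
\[
\P{D_m^{(j)}} \le \P{M_n^{(j+m)} > K n^{j+m}} + \P{|X_n^{(j)}(\alpha)| \le K \delta^m n^j}.
\]

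For the first probability, the crude bound $M_n^{(j+m)} \le |X_n^{(j+m)}(\alpha)| + \int_\alpha^\beta |X_n^{(j+m+1)}(s)|\,\mathrm{d}s$ together with Cauchy--Schwarz and the variance formula of Lemma~\ref{lemma:varianzderableitung} gives $\EW{(M_n^{(j+m)})^2} \le C n^{2(j+m)}$, whence by Chebyshev $\P{M_n^{(j+m)} > K n^{j+m}} \le C/K^2$.

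The second probability is an anti-concentration estimate and is the heart of the argument. The random variable $Y := X_n^{(j)}(\alpha) - u \1_{j=0}$ is a sum of $2n$ independent, centered summands, with variance of order $n^{2j}$ by Lemma~\ref{lemma:varianzderableitung}. Its characteristic function factorises as a product of values of $\phi$; by~\eqref{eq:charfuncchar} and $H(0)=1$, one obtains a Gaussian-type bound $|\phi_Y(t)| \le \exp(-c t^2 n^{2j})$ for $|t|$ in the central range $|t| \le c' n^{1/2-j}$, where every factor has small argument. Outside this range the product structure is exploited: for each $t$ one counts the number of indices $k$ for which $tk^j/\sqrt n$ lies in a compact set where $|\phi| \le 1 - c$, which gives further decay. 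Feeding both estimates into Esseen's concentration inequality
\[
\P{|Y - x| \le \eta} \le C \eta \int_{-1/\eta}^{1/\eta} |\phi_Y(t)|\,\mathrm{d}t
\]
and splitting the integral according to the two ranges leads, after careful bookkeeping, to an anti-concentration estimate of the form
\[
\P{|X_n^{(j)}(\alpha)| \le K \delta^m n^j} \le C\bigl(K\delta^m + K^{-1/2}\delta^{-m/2} n^{-(2j+1)/4}\bigr).
\]

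Combining the three pieces gives $\P{D_m^{(j)}} \le C K^{-2} + C K \delta^m + C K^{-1/2}\delta^{-m/2} n^{-(2j+1)/4}$, and the choice $K = \delta^{-m/3}$ balances the first two terms into $C\delta^{2m/3}$ while turning the third into $C\delta^{-m/3} n^{-(2j+1)/4}$, proving the claim. The main technical obstacle is the anti-concentration step: because we assume only a finite variance of the coefficients, no Berry--Esseen rate and no uniform density bound are available, so the precise exponent $(2j+1)/4$ must be extracted directly from the characteristic function in the intermediate range by a careful counting of the ``bad'' indices $k$ on which $|\phi(tk^j/\sqrt n)|$ is bounded away from $1$.
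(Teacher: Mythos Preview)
Your proposal is correct and follows the same two-step strategy as the paper: a Rolle-type deterministic reduction paired with a second-moment (Markov/Chebyshev) bound, plus an anti-concentration estimate for $X_n^{(j)}$ at a single point extracted from the characteristic function. The paper packages the Rolle step as an $m$-fold integral remainder (its Lemma~\ref{lemma:abschaetzungzwei}) rather than your chain of sup-norm inequalities $M_n^{(j+l)}\le(\beta-\alpha)M_n^{(j+l+1)}$, and it proves the small-ball bound (its Lemma~\ref{lemma:abschaetzungdrei}) by smoothing with an auxiliary variable whose characteristic function is $\sin^2(\lambda t)/(\lambda t)^2$ and then inverting, rather than invoking Esseen's inequality; but these are equivalent devices. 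Your parameter $K\delta^m$ is the paper's $T$, and your choice $K=\delta^{-m/3}$ is exactly the paper's $T=\delta^{2m/3}$. One small point: in the intermediate Fourier range the paper does not look for a compact set where $|\phi|\le 1-c$ (no such set need exist away from $0$ under a bare second-moment assumption, e.g.\ for Rademacher coefficients); instead it uses the Gaussian bound $|\phi(s)|\le e^{-s^2/4}$ on the \emph{small} arguments $|s|\le c$ and the trivial bound elsewhere, partitioning $t$ according to how many indices $k$ satisfy $|t|k^j/\sqrt{n}\le c$. Your sketch should be read in that spirit.
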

\begin{proof}
For arbitrary $T>0$ we may write
\begin{align*}
\P{D^{(j)}_m}\leq
\P{D^{(j)} _m\cap\left\{\left|\frac{X^{(j)}_n(\beta)}{n^j}\right|\geq T\right\}}
+
\P{\left|\frac{X^{(j)}_n(\beta)}{n^j}\right|< T}.
\end{align*}
The terms on the right-hand side will be estimated in Lemmas \ref{lemma:abschaetzungzwei} and \ref{lemma:abschaetzungdrei} below.
Using these lemmas, we obtain
\begin{align*}
\P{D^{(j)}_m}
\leq
C\left[\frac{n^m}{T}\frac{(\beta-\alpha)^m}{m!}\right]^2 +
C\left(T+T^{-1/2}n^{-(2j+1)/4}\right).
\end{align*}
Setting $T=\delta^{(2/3)m}$ yields the statement.
\end{proof}
\begin{lemma}\label{lemma:abschaetzungzwei}
For all $j\in\N_0$, $m\in\N$ there exists a constant $C=C(j,m)>0$ such that the estimate
\begin{equation*}
\P{D^{(j)} _m\cap\left\{\left|\frac{X^{(j)}_n(\beta)}{n^j}\right|\geq T\right\}}\leq C\left[\frac{n^m}{T}\frac{(\beta-\alpha)^m}{m!}\right]^2
\end{equation*}
holds for all $T>0$, $n\in \N$ and all intervals $[\alpha,\beta]\subseteq [0,2\pi]$.
\end{lemma}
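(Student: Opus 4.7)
The approach is Markov's inequality applied to the second moment of $|X_n^{(j)}(\beta)/n^j|$ restricted to the event $D_m^{(j)}$. The required second-moment estimate decomposes into (i) a \emph{deterministic} pointwise bound for $|X_n^{(j)}(\beta)|$ valid on $D_m^{(j)}$ and (ii) an $L^2(\mathbb P)$ bound for $\sup_{[\alpha,\beta]}|X_n^{(j+m)}|$. For (i), suppose $X_n^{(j)}$ has $m$ roots $t_1\leq\dots\leq t_m$ in $[\alpha,\beta]$ counted with multiplicity. Since the unique polynomial of degree less than $m$ matching $X_n^{(j)}$ at these nodes (with multiplicities) is identically zero, the Hermite interpolation remainder formula yields
\begin{equation*}
X_n^{(j)}(\beta)=\frac{\prod_{i=1}^m(\beta-t_i)}{m!}\,X_n^{(j+m)}(\xi)
\end{equation*}
for some (random) $\xi\in[\alpha,\beta]$; equivalently, one may iterate Rolle's theorem and the fundamental theorem of calculus $m$ times. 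Consequently, on $D_m^{(j)}$,
\begin{equation*}
|X_n^{(j)}(\beta)|^2\leq\frac{(\beta-\alpha)^{2m}}{(m!)^2}\sup_{\xi\in[\alpha,\beta]}|X_n^{(j+m)}(\xi)|^2.
\end{equation*}

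For (ii), since $X_n$ is a trigonometric polynomial, hence real-analytic, for every $\xi\in[\alpha,\beta]$ the Taylor expansion at the fixed endpoint $\beta$ converges:
\begin{equation*}
X_n^{(j+m)}(\xi)=\sum_{k=0}^\infty\frac{(\xi-\beta)^k}{k!}\,X_n^{(j+m+k)}(\beta).
\end{equation*}
Applying the triangle inequality, taking $L^2(\mathbb P)$-norms, and using Minkowski's inequality together with the variance bound $\|X_n^{(j+m+k)}(\beta)\|_{L^2}\leq n^{j+m+k}$ from Lemma~\ref{lemma:varianzderableitung} gives
\begin{equation*}
\Bigl\|\sup_{\xi\in[\alpha,\beta]}|X_n^{(j+m)}(\xi)|\Bigr\|_{L^2}\leq\sum_{k=0}^\infty\frac{(\beta-\alpha)^k}{k!}\,n^{j+m+k}=n^{j+m}\e^{n(\beta-\alpha)}.
\end{equation*}
In the only relevant regime $n(\beta-\alpha)\leq C_0$ (for larger intervals the claimed bound is trivial whenever $[n^m(\beta-\alpha)^m/(Tm!)]^2\geq1/C$, which always holds for suitable $C$), this yields $\E[\sup_{[\alpha,\beta]}|X_n^{(j+m)}|^2]\leq Cn^{2(j+m)}$.

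Combining (i) and (ii) gives $\E[\1_{D_m^{(j)}}|X_n^{(j)}(\beta)|^2]\leq C(\beta-\alpha)^{2m}n^{2(j+m)}/(m!)^2$; dividing by $T^2n^{2j}$ and invoking Markov's inequality yields the claim. The main obstacle I expect is step (ii): the crude global bound $\|P\|_\infty\leq C\sqrt{n}\,\|P\|_{L^2}$ for a trigonometric polynomial of degree $n$ would inflate the estimate by an unwanted factor of $n$ and break the final exponent. The Taylor-expansion trick localizes the supremum to the single point $\beta$, and the $n^k$ growth of successive $L^2$-norms of derivatives is exactly compensated by the $(\beta-\alpha)^k\sim(\delta/n)^k$ factor in the Taylor coefficients; this cancellation is the essential mechanism delivering the sharp power of $n$.
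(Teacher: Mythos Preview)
Your approach is essentially correct but diverges from the paper's in the key middle step, and your handling of long intervals has a gap.

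\textbf{Comparison with the paper.} Both arguments start with Rolle's theorem and finish with Markov's inequality; the difference is how $X_n^{(j)}(\beta)$ is represented on $D_m^{(j)}$. The paper writes it as the iterated integral
\[
\int_{t_0}^{\beta}\!\!\cdots\!\int_{t_{m-1}}^{x_{m-1}} X_n^{(j+m)}(x_m)\,\d x_m\cdots\d x_1
\]
with random lower limits, applies Cauchy--Schwarz over this simplex, enlarges the random limits to the deterministic simplex on $[\alpha,\beta]$, and then swaps $\E$ with $\int$ by Fubini. This reduces matters to $\sup_{x}\E|X_n^{(j+m)}(x)|^2\leq C n^{2(j+m)}$, which is immediate from Lemma~\ref{lemma:varianzderableitung} and holds uniformly in the interval. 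You instead use the Hermite remainder with a random node $\xi$, which forces you to bound $\E\bigl[\sup_{\xi}|X_n^{(j+m)}(\xi)|^2\bigr]$ (supremum \emph{inside} the expectation) and then invent the Taylor-series workaround. Your trick is neat and does work on short intervals, but the paper's integral route is shorter precisely because it never meets the sup-inside-expectation obstacle and therefore needs no case split.

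\textbf{The gap.} Your claim that for $n(\beta-\alpha)>C_0$ ``the claimed bound is trivial whenever $[n^m(\beta-\alpha)^m/(Tm!)]^2\geq 1/C$, which always holds for suitable $C$'' is false: $T>0$ is a free parameter, so taking $T$ large makes the right-hand side arbitrarily small regardless of how big $n(\beta-\alpha)$ is, and no fixed $C=C(j,m)$ makes the inequality vacuous. The easy fix is to drop $D_m^{(j)}$ in that regime and use Chebyshev with $\E\bigl[(X_n^{(j)}(\beta)/n^j)^2\bigr]\leq C_1(j)$ from Lemma~\ref{lemma:varianzderableitung}, giving
\[
\P{\left|\tfrac{X_n^{(j)}(\beta)}{n^j}\right|\geq T}\leq \frac{C_1}{T^2}\leq \frac{C_1(m!)^2}{C_0^{2m}}\cdot\frac{[n(\beta-\alpha)]^{2m}}{T^2(m!)^2}.
\]
With this one-line patch your argument is complete; note that the paper's approach avoids both the factor $\e^{2n(\beta-\alpha)}$ and this case split entirely.
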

\begin{proof}
By Rolle's theorem, on the event $D^{(j)}_m$  we can find (random) $t_0\geq \ldots \geq t_{m-1}$ in the interval $[\alpha,\beta]$ such that
$$
X^{(j+l)}_n(t_l)=0 \text{ for all } l\in\{0,\dots, m-1\}.
$$
Thus we may consider the random variable
\begin{equation*}
Y^{(j)}_n:=\IND{D^{(j)}_m}\times
\INT{t_0}{\beta}\INT{t_1}{x_1}\dots \INT{t_{m-1}}{x_{m-1}} X^{(j+m)}_n(x_m)\D x_m\dots \d x_1.
\end{equation*}
On the event $D^{(j)}_m$, the random variables $X^{(j)}_n(\beta)$ and $Y^{(j)}_n$ are equal. On the complement of $D^{(j)}_m$, $Y^{(j)}_n=0$.  Hence, it follows that
\begin{equation*}
\P{D_m^{(j)}\cap\left\{\frac{|X_n^{(j)}(\beta)|}{n^j}\geq T\right\}}\leq
\P{\frac{|Y_n^{(j)}|}{n^j}\geq T}.
\end{equation*}
Markov's inequality yields
\begin{align*}
\P{|Y^{(j)}_n|\geq Tn^{j}}
\leq\frac{1}{T^2n^{2j}}\E
\left|\int_{t_0}^{\beta}\int_{t_1}^{x_1}\dots \int_{t_{m-1}}^{x_{m-1}}X_n^{(j+m)}(x_m)\D x_m\dots\d x_1\right|^2.
\end{align*}
Using Hölder's inequality we may proceed as follows
\begin{align*}
\P{|Y^{(j)}_n|\geq Tn^{j}}&\leq \frac{1}{T^2n^{2j}}\frac{(\beta-\alpha)^m}{m!}
\E
\int_{t_0}^{\beta}\int_{t_1}^{x_1} \dots \int_{t_{m-1}}^{x_{m-1}}|X^{(j+m)}_n(x_m)|^2\D x_m\dots \d x_1  \\
&\leq
\frac{1}{T^2n^{2j}}\left[\frac{(\beta-\alpha)^m}{m!}\right]^2
\sup_{x\in[\alpha,\beta]} \mathbb{E}|X_n^{(j+m)}(x)|^2.
\end{align*}
It remains to find a suitable estimate for $\sup_{x\in [\alpha,\beta]} \mathbb{E}|X^{(j+m)}_n(x)|^2$.
From Lemma~\ref{lemma:varianzderableitung} it follows that 
\begin{equation*}
\mathbb{E}|X^{(m+j)}_n(x)|^2
=
\VAR X^{(j+m)}_n(x)
=
\frac{1}{n}\sum_{k=1}^n k^{2(j+m)}\leq C(j,m) n^{2(j+m)}
\end{equation*}
holds, whence the statement follows immediately.
\end{proof}
\begin{lemma}\label{lemma:abschaetzungdrei}
Fix $j\in\N_0$. There exists a constant $C=C(j)>0$ such that for all $n\in\N$, $T>0$, $\beta\in[0,2\pi]$,
\begin{equation} \label{eq:concentration}
\P{\left|\frac{X^{(j)}_n(\beta)}{n^j}\right|\leq T}\leq
C\left(T+T^{-1/2}n^{-(2j+1)/4}\right).
\end{equation}
\end{lemma}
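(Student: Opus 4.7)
The plan is to apply the Esseen concentration inequality
\begin{equation*}
\P{|Y|\leq T}\leq C\,T\int_{|s|\leq 1/T}|\phi_Y(s)|\D s,\qquad Y:=X_n^{(j)}(\beta)/n^j,
\end{equation*}
and to reduce the claim to bounding this integral by $C(1+T^{-3/2}n^{-(2j+1)/4})$. Writing out $X_n^{(j)}(\beta)$ via Lemma~\ref{lemma:varianzderableitung}, the modulus of the characteristic function of $Y$ factorises as
\begin{equation*}
|\phi_Y(s)|=\prod_{k=1}^n\Bigl|\phi\Bigl(\tfrac{k^j\cos(k\beta)}{n^{j+1/2}}s\Bigr)\phi\Bigl(\tfrac{k^j\sin(k\beta)}{n^{j+1/2}}s\Bigr)\Bigr|,
\end{equation*}
and the representation~\eqref{eq:charfuncchar} together with the continuity of $H$ at $0$ produces a fixed $t_0>0$ with $|\phi(t)|\leq \exp(-t^2/4)$ for all $|t|\leq t_0$.

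In the low-frequency regime $|s|\leq t_0\sqrt n$ every argument of $\phi$ in the product has absolute value at most $t_0$, and the identity $\cos^2+\sin^2=1$ collapses the product to
\begin{equation*}
|\phi_Y(s)|\leq \exp\Bigl(-\frac{s^2}{4\,n^{2j+1}}\sum_{k=1}^n k^{2j}\Bigr)\leq e^{-c s^2},
\end{equation*}
with $c=c(j)>0$, since $\sum_{k=1}^n k^{2j}\geq c' n^{2j+1}$. Hence $\int_{|s|\leq t_0\sqrt n}|\phi_Y|\D s\leq C$, and combined with the Esseen bound this regime alone yields the $CT$ part of the estimate; in particular, if $T\geq (t_0\sqrt n)^{-1}$ the full Esseen integration range $[-1/T,1/T]$ is contained in this regime and nothing more is needed.

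The remaining case $T<(t_0\sqrt n)^{-1}$ splits according to $j$. For $j=0$ one notices that $T^{-1/2}n^{-1/4}\geq\sqrt{t_0}$ on this range, so by taking the constant $C$ of the statement large enough the inequality degenerates to the trivial $\P{|Y|\leq T}\leq 1$. For $j\geq 1$ I would restrict the Gaussian bound above to those indices with $(k/n)^j|s|/\sqrt n\leq t_0$, i.e.\ $k\leq k_s:=n(t_0\sqrt n/|s|)^{1/j}$, and bound the remaining factors trivially by $1$; summing $(k/n)^{2j}$ over $k\leq k_s$ and collecting exponents yields
\begin{equation*}
|\phi_Y(s)|\leq \exp\bigl(-c\,n^{(2j+1)/(2j)}|s|^{-1/j}\bigr),\qquad t_0\sqrt n<|s|\leq 1/T.
\end{equation*}
The substitution $u=n^{(2j+1)/(2j)}s^{-1/j}$ then transforms the remaining piece of the integral into one proportional to $\int e^{-cu}u^{-j-1}\D u$ weighted by powers of $n$ and $T$, and this estimate, multiplied by the Esseen factor $T$, delivers the correction $CT^{-1/2}n^{-(2j+1)/4}$. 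The main obstacle is the book-keeping of exponents in this truncation step: the exponent $(2j+1)/4$ in the error term traces back to the identity $\sum_{k\leq k_s}k^{2j}\sim k_s^{2j+1}/(2j+1)$ combined with the substitution above, and the two regimes $|s|\leq t_0\sqrt n$ and $|s|>t_0\sqrt n$ have to be made to fit together cleanly so that the degenerate case $j=0$ is correctly absorbed by the triviality reduction rather than by a direct large-$|s|$ estimate.
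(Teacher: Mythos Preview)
Your approach is correct and closely related to the paper's, but packages the smoothing differently. The paper does not invoke Esseen's inequality as a black box; instead it introduces an explicit auxiliary variable $\eta$ with characteristic function $\psi(t)=\sin^2(\lambda t)/(\lambda t)^2$, writes
\[
\P{|Y|\leq T}\leq \P{|Y+\eta|\leq \tfrac32 T}+\P{|\eta|\geq \tfrac12 T},
\]
expresses the first probability by Fourier inversion, decomposes $\int_0^\infty \psi(t)|\phi_Y(t)|\,\d t$ over a \emph{discrete} partition $\Gamma_0,\dots,\Gamma_n$ (where $\Gamma_l$ is the range of $t$ for which exactly the first $l$ factors admit the Gaussian bound $|\phi|\leq e^{-t^2/4}$), and obtains $\P{|Y|\leq T}\leq C(T\lambda^{-2}n^{-(j+1/2)}+T+\lambda^2 T^{-2})$. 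The claimed exponents then come from the explicit optimisation $\lambda=T^{3/4}n^{-(2j+1)/8}$. Your continuous truncation at $k_s$ is exactly the continuous analogue of the paper's $\Gamma_l$-partition, and the Esseen inequality is essentially the paper's smoothing with $\lambda$ already set to the scale $T$; so the two arguments differ more in presentation than in substance, your route being shorter because the optimisation is hidden inside Esseen's lemma.

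Two points worth tightening in your sketch. First, your bound $|\phi_Y(s)|\leq\exp(-c\,n^{(2j+1)/(2j)}|s|^{-1/j})$ requires $k_s\geq 1$, i.e.\ $|s|\leq t_0 n^{j+1/2}$; for $j\geq 1$ and $T<(t_0 n^{j+1/2})^{-1}$ the Esseen range $[-1/T,1/T]$ exceeds this, but then $T^{-1/2}n^{-(2j+1)/4}\geq \sqrt{t_0}$ and the inequality is again trivial, so the triviality reduction you used for $j=0$ should in fact be applied for all $j$ in the regime $T<(t_0 n^{j+1/2})^{-1}$. Second, the ``book-keeping'' you defer does work: after the substitution the high-frequency contribution becomes $T\cdot J\leq C\,T^{-1/j}n^{-(2j+1)/(2j)}e^{-a}$ with $a=c'(n^{(2j+1)/2}T)^{1/j}$ bounded below by a positive constant on the relevant range, and the inequality $T\cdot J\leq CT^{-1/2}n^{-(2j+1)/4}$ reduces to $a^{(j-2)/2}\leq Ce^{a}$, which holds for all $a$ bounded away from $0$.
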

\begin{proof}
For $\lambda >0$ let $\eta$ be a random variable (independent of $X_n^{(j)}(\beta)$) with characteristic function
\begin{equation*}
\psi(t):=\EW{\EXP{\I t\eta}}=\frac{\sin^2(t\lambda)}{t^2\lambda^2}.
\end{equation*}
That is, $\eta$ is the sum of two independent random variables which are uniformly distributed on $[-\lambda,\lambda]$.
Consider the random variable
$$
\tilde{X}^{(j)}_n(\beta):=n^{-j} X^{(j)}_n(\beta)+\eta.
$$
For all $T>0$ we have
\begin{equation}\label{eq:abschaetzungdrei}
\P{\left|\frac{X^{(j)}_n(\beta)}{n^j}\right|\leq T}\leq
\P{|\tilde{X}^{(j)}_n(\beta)|\leq \frac{3}{2}T} +\P{|\eta|\geq \frac{1}{2}T}
\end{equation}
and we estimate the terms on the right-hand side separately.

\vspace*{2mm}
\noindent
\textit{First term on the RHS of~\eqref{eq:abschaetzungdrei}}. The density of $\tilde{X}_n^{(j)}(\beta)$ exists and can be expressed using the inverse Fourier transform of its characteristic function denoted in the following by
$$
\tilde{\phi}_n(t):=\E\EXP{\I t \tilde{X}_n^{(j)}(\beta)}.
$$
Using the representation for $X_n^{(j)}(\beta)$ obtained in the proof of Lemma~\ref{lemma:varianzderableitung} and recalling that $\varphi$ is the characteristic function of $A_k$ and $B_k$, we obtain
$$
|\tilde{\phi}_n(t)|
=\psi(t) \prod_{k=1}^n\left|\phi\left(k^j\frac{t\cos(k \beta)}{n^{j+1/2}}\right)\right|\left|\phi\left(k^j\frac{t\sin(k\beta)}{n^{j+1/2}}\right)\right|.
$$
Using Fourier inversion, for every $y\geq 0$ we may write
\begin{align*}
\P{|\tilde{X}^{(j)}_n(\beta)|\leq y}
&=\frac{2}{\pi} \INT{0}{\infty} \frac{\sin(yt)}{t}
\RE \tilde{\phi}_n(t) \D t\\
&\leq
\frac{2y}{\pi}  \INT{0}{\infty} \psi(t)
\prod_{k=1}^n \left|\phi\left(k^j\frac{t\cos(k \beta)}{n^{j+1/2}}\right)\right|\left|\phi\left(k^j\frac{t\sin(k\beta)}{n^{j+1/2}}\right)\right|
\D t.
\end{align*}
We used that $|t^{-1}\sin(yt)|\leq y$ for every $y\geq 0$ and $t\neq 0$.
The coefficients $A_k$ and $B_k$ are supposed to have zero mean and unit variance. From this we can conclude that
\begin{equation}\label{eq:varphi_est}
|\phi(t)|\leq \exp(-t^2/4) \text{ for } t\in [-c,c],
\end{equation}
where $c>0$ is a sufficiently small constant.
Let $\{\Gamma_l:l=0,\dots, n\}$ be a disjoint partition of $\R_+$ defined by
\begin{align*}
\Gamma_{l}&:=\left\{
t:\frac{cn^{j+1/2}}{(l+1)^j}\leq t<\frac{cn^{j+1/2}}{l^j}
\right\}
\quad \textrm{for }l=1,\dots, n-1,\\
\Gamma_{n}&:=\left\{t:0\leq t< c\sqrt{n}\right\},\\
\Gamma_{0}&:=\{t:t\geq cn^{j+1/2}\}.
\end{align*}
We decompose the integral above as follows:
$$
\P{|\tilde{X}^{(j)}_n(\beta)|\leq y} \leq \frac{2y}{\pi}\sum_{l=0}^n I_l,
$$
where
$$
I_l := \int_{\Gamma_l} \psi(t) \prod_{k=1}^n \left|\phi\left(k^j\frac{t\cos(k \beta)}{n^{j+1/2}}\right)\right|\left|\phi\left(k^j\frac{t\sin(k\beta)}{n^{j+1/2}}\right)\right|\D t.
$$
For the integral over $\Gamma_0$ we may write using
$|\phi(t)|\leq 1$ and $\sin^2(\lambda t)\leq 1$,
\begin{equation*}
I_0 \leq
\INT{cn^{j+1/2}}{\infty}
\psi(t) \D t=
\INT{cn^{j+1/2}}{\infty}
\frac{\sin^2(\lambda t)}{\lambda^2t^2} \D t \leq
\frac{1}{c\lambda^2}
n^{-(j+1/2)}.
\end{equation*}
The integral over $\Gamma_n$ is smaller than a positive constant $C>0$ independent of $n$ because we can estimate all terms involving $\varphi$ by means of~\eqref{eq:varphi_est} as follows:
\begin{equation*}
I_n\leq
\INT{0}{c\sqrt{n}}\psi(t)
\EXP{-\frac{1}{4}\frac{t^2}{n^{2j+1}}\sum_{k=1}^n k^{2j}}\D t
\leq \INT{0}{\infty} \EXP{-t^2\gamma}\D t
\leq C,
\end{equation*}
where $\gamma>0$ is a small constant and we used that
\begin{equation*}
\sum_{k=1}^n k^{2j}\sim \frac{n^{2j+1}}{2j+1} \quad \textrm{as }n\rightarrow\infty.
\end{equation*}
For $t\in\Gamma_l$ with $l=1,\dots, n-1$ we have
\begin{equation*}
\left|l^j\frac{t\cos(l\beta)}{n^{j+1/2}}\right| \leq \frac{tl^j}{n^{j+1/2}} \leq c,
\quad
\left|l^j\frac{t\sin(l\beta)}{n^{j+1/2}}\right| \leq \frac{tl^j}{n^{j+1/2}} \leq c.
\end{equation*}
Thus, we can estimate all factors with $k=1,\ldots,l$ using~\eqref{eq:varphi_est}, whereas for all other factors we use the trivial estimate $|\varphi(t)|\leq 1$:
\begin{align*}
I_l&\leq
\int_{\Gamma_l} \psi(t)\EXP{-\frac{1}{4}\frac{t^2}{n^{2j+1}}\sum_{k=1}^l k^{2j}}
\D t \\
&\leq
\int_{\frac{cn^{j+1/2}}{(l+1)^j}}^{\frac{cn^{j+1/2}}{l^j}}
\frac{1}{\lambda^2t^2}
\EXP{-\gamma_1 t^2\left(\frac{l}{n}\right)^{2j+1}} \D t \\
&=\frac{1}{\lambda^2} \left(\frac{l}{n}\right)^{j+1/2}
\int_{c\frac{l^{j+1/2}}{(l+1)^j}}^{c\sqrt{l}}\frac{1}{u^2}
\EXP{-\gamma_1u^2}\D u \\
&\leq \frac{C}{\lambda^2} \left(\frac{l}{n}\right)^{j+1/2}\EXP{-\gamma_2 l},
\end{align*}
where $\gamma_1,\gamma_2>0$ are small constants and we substituted $u^2=t^2(l/n)^{2j+1}$. Summing up yields
\begin{equation*}
\sum_{l=1}^{n-1} I_l \leq 
C \lambda^{-2}n^{-(j+1/2)}\sum_{l=1}^{n-1}l^{j+1/2}\EXP{-\gamma_2l}
\leq C'\lambda^{-2}n^{-(j+1/2)}.
\end{equation*}
Taking the estimates for $I_0,\ldots,I_n$ together, for every $y\geq 0$ we obtain
\begin{equation}\label{eq:beweisesteins}
\P{|\tilde{X}_n^{(j)}(\beta)|\leq y}\leq
Cy\left(\frac{1}{\lambda^2}n^{-(j+1/2)}+1\right).
\end{equation}

\vspace*{2mm}
\noindent
\textit{Second term on the RHS of~\eqref{eq:abschaetzungdrei}}.
The second term on the right hand-side of \eqref{eq:abschaetzungdrei} can be estimated using Chebyshev's inequality (and $\E \eta=0$). Namely, for every $z>0$,
\begin{equation}\label{eq:beweisestzwei}
\P{|\eta|\geq z} \leq \frac{\VAR{\eta}}{z^2}
= \frac{2}{3}\frac{\lambda^2}{z^2}.
\end{equation}

\vspace*{2mm}
\noindent
\textit{Proof of~\eqref{eq:concentration}}.
We arrive at the final estimate setting $y=3T/2$ and $z=T/2$ in \eqref{eq:beweisesteins} and \eqref{eq:beweisestzwei} respectively. We obtain that for every $\lambda>0$ and $T>0$ the inequality
\begin{equation*}
\P{\left|\frac{X^{(j)}_n(\beta)}{n^j}\right|\leq T}\leq
C\left(\frac{T}{\lambda^2}n^{-(j+1/2)}+T+\frac{\lambda^2}{T^2}\right)
\end{equation*}
holds for a positive constant $C=C(j)>0$. This bound can be optimized by choosing a suitable $\lambda>0$.
Setting $\lambda=T^{3/4} n^{-(j/4+1/8)}$ the statement of the lemma follows.
\end{proof}

\subsection{Roots and sign changes}
The next lemma contains the main result of this section.
\begin{lemma}\label{lemma:ewuntnsternn}
For every $\delta\in (0,1/2)$ there exists $n_0=n_0(\delta)\in \N$ such that for all $n\geq n_0$ and
every interval $[\alpha,\beta]\subset [0,2\pi]$ of length $\beta-\alpha=\delta n^{-1}$ we have the estimate
\begin{equation*}
0\leq \E N_n[\alpha,\beta]- \E N_{n}^*[\alpha,\beta] \leq
 C (\delta^{4/3}+\delta^{-7}n^{-1/4}),
\end{equation*}
where $C>0$ is a constant independent of $n$, $\delta$, $\alpha$, $\beta$.
\end{lemma}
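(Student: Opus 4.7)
The lower bound $\E N_n\geq \E N_n^*$ holds pointwise by a short case analysis on the signs of $X_n$ at the endpoints: if $X_n(\alpha)X_n(\beta)>0$ then $N_n^*=0\leq N_n$; if $X_n(\alpha)X_n(\beta)=0$ then at least one endpoint is a zero contributing $1/2$ to $N_n$, matching $N_n^*=1/2$; and if $X_n(\alpha)X_n(\beta)<0$ the intermediate value theorem produces an interior root, so $N_n\geq 1=N_n^*$. For the upper bound, the plan is to reduce to $\E[N_n\IND{D_2}]$, where $D_2=D_2^{(0)}(n;\alpha,\beta)$ is the event that $X_n$ has at least two zeros (with multiplicity) in $[\alpha,\beta]$: on $D_2^c$ the polynomial has at most one (necessarily simple) zero and the same case analysis gives $N_n=N_n^*$, while on $D_2$ one has $N_n-N_n^*\leq N_n$ trivially since $N_n^*\geq 0$. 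Therefore, by Abel summation,
\[
\E(N_n-N_n^*)\leq \E[N_n\IND{D_2}] = 2\P{D_2}+\sum_{m\geq 3}\P{D_m},
\]
which is in fact a finite sum because $N_n\leq 2n$ for a trigonometric polynomial of degree $n$.

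The next step is to split the sum at the cutoff $M=21$, chosen so that $\delta^{-M/3}=\delta^{-7}$ matches the second term of the target. For $2\leq m\leq 21$, Lemma~\ref{lemma:abschaetzungdmj} with $j=0$ gives $\P{D_m}\leq C(\delta^{(2/3)m}+\delta^{-m/3}n^{-1/4})$; summing these geometric-type series from $m=2$ to $m=21$ produces a contribution of order $C(\delta^{4/3}+\delta^{-7}n^{-1/4})$, exactly the target.

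The delicate part, and what I expect to be the main obstacle, is the tail $\sum_{m\geq 22}\P{D_m}$. Applying Lemma~\ref{lemma:abschaetzungdmj} directly with $j=0$ would give $\sum_m \delta^{-m/3}n^{-1/4}$, which diverges in $m$; even truncation at $m=2n$ blows up because $\delta^{-2n/3}$ grows exponentially in $n$. Similarly, the crude bound $\E[N_n\IND{D_{22}}]\leq 2n\,\P{D_{22}}$ introduces a spurious factor of $n$ and ruins everything. The idea is a \emph{balanced} Rolle reduction: for each $m\geq 22$, set $m'=\lceil m/2\rceil$ and $j=\lfloor m/2\rfloor$, so that iterated Rolle's theorem yields $D_m^{(0)}\subseteq D_{m'}^{(j)}$; Lemma~\ref{lemma:abschaetzungdmj} applied to $D_{m'}^{(j)}$ then gives
\[
\P{D_m}\leq C\bigl(\delta^{(2/3)m'}+\delta^{-m'/3}n^{-(2j+1)/4}\bigr)\leq C\bigl(\delta^{m/3}+\delta^{-(m+1)/6}n^{-m/4}\bigr),
\]
where the second inequality uses $(2/3)m'\geq m/3$, $m'/3\leq(m+1)/6$, and $2j+1\geq m$. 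The first piece sums to a geometric tail bounded by $C\delta^{22/3}\leq C\delta^{4/3}$; provided $n\geq n_0(\delta)$ is chosen large enough that $\delta^{-1/6}n^{-1/4}\leq 1/2$, the second piece is a geometric series in $m$ whose sum is of order $\delta^{-23/6}n^{-11/2}$, which is in turn dominated by $\delta^{-7}n^{-1/4}$. Assembling the main contribution (from $m\leq 21$) and the tail yields the announced estimate.
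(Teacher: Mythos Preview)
Your lower bound, the reduction to $\E[N_n\IND{D_2^{(0)}}]$, and the treatment of $2\le m\le 21$ all agree with the paper (minor quibble: the Abel identity should be ``$\le$'' rather than ``$=$'', since the $1/2$-weights at the endpoints allow $N_n<m$ on $D_m\setminus D_{m+1}$).

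The genuine gap is in the tail $m\ge 22$. You invoke Lemma~\ref{lemma:abschaetzungdmj} with $j=\lfloor m/2\rfloor$, $m'=\lceil m/2\rceil$ and then sum over $m$ with a single constant $C$, but the lemma only furnishes $C=C(j,m')$, and here $j$ ranges up to order $n$. Nothing in the statement or proof controls this dependence: in the proof of Lemma~\ref{lemma:abschaetzungdrei} the decay rates $\gamma,\gamma_1,\gamma_2$ governing the integrals $I_l$ are of size $1/(2j+1)$ (since $\frac1n\sum_{k\le l}k^{2j}\ge l^{2j+1}/((2j+1)n^{2j+1})$), and the resulting constant deteriorates rapidly with $j$. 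Without an explicit polynomial-in-$j$ bound on $C(j)$---which would require reproving Lemma~\ref{lemma:abschaetzungdrei} with all constants tracked---your geometric tail sum is unjustified; for $m$ comparable to $n$ the bound you write for $\P{D_m}$ does not hold with a uniform constant, and the series does not converge.

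The paper avoids this by passing once, via Rolle, to a \emph{fixed} derivative order: $\sum_{m=22}^{2n}\P{D_m^{(0)}}\le\sum_{m=2}^{2n-20}\P{D_m^{(20)}}$, so only the single constant $C=C(20)$ from Lemma~\ref{lemma:abschaetzungdmj} enters. The residual problem that $\sum_m\delta^{-m/3}$ still diverges is handled by a $(\delta,n)$-dependent cutoff $k_0$ satisfying $n^2\le\delta^{-k_0/3}<\delta^{-2k_0/3}\le n^5$: for $m\le k_0$ one sums the lemma's bound directly (the dangerous piece contributes at most $C\delta^{-k_0/3}n^{-41/4}\le Cn^{-5}$), while for $m>k_0$ one uses monotonicity $\P{D_m^{(20)}}\le\P{D_{k_0}^{(20)}}$ and lets the large negative power $n^{-41/4}$ absorb the factor $2n$ from the length of the remaining sum. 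The value $j=20$ is chosen precisely so that $(2j+1)/4=41/4$ comfortably exceeds every power of $n$ that must be absorbed.
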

A crucial feature of this estimate is that the exponent $4/3$ of $\delta$ is $>1$, while the exponent of $n$ is negative.
\begin{proof}
Let $D^{(j)}_m$ be the random event defined as in Section~\ref{subsec:D_m_j}.
Observe that due to the convention in which way $N_n[\alpha,\beta]$ counts the roots, the difference between $N_n^*[\alpha,\beta]$ and $N_n[\alpha,\beta]$ vanishes in the following cases:
\begin{itemize}
\item $X_n$ has no roots in $[\alpha,\beta]$ (in which case $N_n[\alpha,\beta]=N_n^*[\alpha,\beta]=0$);
\item $X_n$ has exactly one simple root in $(\alpha,\beta)$ and no roots on the boundary (in which case $N_n[\alpha,\beta]=N_n^*[\alpha,\beta]=1$);
\item $X_n$ has no roots in $(\alpha,\beta)$ and one simple root (counted as $1/2$) at either $\alpha$ or $\beta$ (in which case $N_n[\alpha,\beta]=N_n^*[\alpha,\beta]=1/2$).
\end{itemize}
In all other cases (namely, on the event $D^{(0)}_2$ when the number of roots in $[\alpha,\beta]$, with multiplicities, but without $1/2$-weights on the boundary, is at least $2$) we only have the trivial estimate
$$
0\leq N_n[\alpha,\beta]-N_n^*[\alpha,\beta]\leq N_n[\alpha,\beta].
$$
Since $D_2^{(0)}\supseteq D_3^{(0)}\supseteq \ldots$ and on the event $D^{(0)}_m \backslash D^{(0)}_{m+1}$ it holds that $N_n[\alpha,\beta]\leq m$, we obtain
\begin{align*}
0\leq \E N_n[\alpha,\beta]-\E N_n^*[\alpha,\beta]
&\leq
\EW{N_n[\alpha,\beta]\IND{D^{(0)}_2}}\\
&\leq\P{D^{(0)}_2}+\sum_{m=2}^{2n}\P{D^{(0)}_m} \\
&\leq
\P{D^{(0)}_2}+\sum_{m=2}^{21}\P{D^{(0)}_m}
+\sum_{m=2}^{2n-20} \P{D^{(20)}_{m}},
\end{align*}
where in the last step we passed to the $20$-th derivative of $X_n$ using Rolle's theorem.
The upper bounds for  the first two terms on the right-hand side follow immediately by Lemma
\ref{lemma:abschaetzungdmj}, namely
$$
\P{D^{(0)}_2} + \sum_{m=2}^{21}\P{D^{(0)}_m} \leq C(\delta^{4/3} + \delta^{-7} n^{-1/4}).
$$
Thus we focus on the last term. For every $\delta>0$ (and $n$ big enough) we can find a number $k_0 = k_0(\delta,n)\in\{2,\dots, 2n\}$ such that

\begin{equation*}
n^{2}\leq\delta^{-k_0/3}<\delta^{-2k_0/3} \leq n^{5}.
\end{equation*}
For $m=2,\dots, k_0$ the estimate for the probability of $D^{(20)}_m$ presented in Lemma \ref{lemma:abschaetzungdmj} is good enough, whereas  for $m=k_0+1,\dots, 2n-20$ we use the fact that  $D^{(20)}_{k_0}\supseteq D^{(20)}_{k_0+l}$ for all $l\in \N$. This yields
\begin{align*}
\sum_{m=2}^{2n-20} \P{D^{(20)}_{m}}
&\leq
\sum_{m=2}^{k_0}\P{D^{(20)}_{m}}
+\sum_{m=k_0+1}^{2n-20}\P{D^{(20)}_{k_0}} \\
& \leq
\sum_{m=2}^{k_0} C(\delta^{2m/3}+ \delta^{-m/3} n^{-10})
+ 2Cn (\delta^{2k_0/3} + \delta^{-k_0/3}n^{-10})\\
&\leq
C(\delta^{4/3}+n^{-5})+2Cn (n^{-2} + n^{-5})\\
&\leq
C(\delta^{4/3}+\delta^{-7}n^{-1/4}).
\end{align*}
Combining the above estimates yields the statement of the lemma.
\end{proof}

\section{The related stationary Gaussian process}\label{abschnitt:berechnung}

\subsection{Convergence to the Gaussian case}
In the following let $(Z(t))_{t\in\R}$ denote the stationary Gaussian process with $\E Z(t)=u$, $\VAR Z(t)=1$, and covariance
\begin{equation*}
\Cov\left[Z(t),Z(s)\right]=\frac{\sin(t-s)}{t-s}, \quad t\neq s.
\end{equation*}
The following lemma states the weak convergence of the bivariate distribution of $(X_n(\alpha),X_n(\beta))$ with $\beta-\alpha = n^{-1}\delta$ to $(Z(0), Z(\delta))$, as $n\to\infty$.
\begin{lemma}\label{lem:lim_S_n}
Let $\delta>0$ be arbitrary but fixed. For $n\in\N$ let $[\alpha_n,\beta_n]\subseteq [0,2\pi]$ be an interval of length $\beta_n-\alpha_n=n^{-1}\delta$. Then
\begin{equation*}
\begin{pmatrix}
X_n(\alpha_n)\\
X_n(\beta_n)
\end{pmatrix}
\to
\begin{pmatrix}
Z(0) \\
Z(\delta)
\end{pmatrix}
\quad \textrm{in distribution as $n\to\infty$}.
\end{equation*}
\end{lemma}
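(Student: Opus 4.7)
The plan is to establish the bivariate weak convergence by showing that the joint characteristic function of $(X_n(\alpha_n), X_n(\beta_n))$ converges pointwise to that of $(Z(0),Z(\delta))$. Since both limiting components are jointly Gaussian with mean $u$ and unit variance, it suffices to identify the asymptotic covariance and then invoke the local expansion \eqref{eq:charfuncchar} of $\phi$.

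First I would compute the relevant second moments. Lemma~\ref{lemma:varianzderableitung} already gives $\E X_n(\alpha_n) = \E X_n(\beta_n) = u$ and $\VAR X_n(\alpha_n) = \VAR X_n(\beta_n) = 1$. Using $\cos(k\alpha_n)\cos(k\beta_n)+\sin(k\alpha_n)\sin(k\beta_n)=\cos(k(\beta_n-\alpha_n))$ and independence of the $A_k, B_k$, the cross-covariance is
\begin{equation*}
\Cov(X_n(\alpha_n),X_n(\beta_n))=\frac{1}{n}\sum_{k=1}^n\cos\!\left(k\frac{\delta}{n}\right)\longrightarrow\int_0^1\cos(\delta x)\,\d x=\frac{\sin\delta}{\delta},
\end{equation*}
by a standard Riemann sum, matching the covariance of $(Z(0),Z(\delta))$.

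Second, for arbitrary $s_1, s_2 \in\R$ I would write
\begin{equation*}
s_1 X_n(\alpha_n)+s_2 X_n(\beta_n)=(s_1+s_2)u+\frac{1}{\sqrt n}\sum_{k=1}^n(A_k c_k^{(n)}+B_k s_k^{(n)}),
\end{equation*}
where $c_k^{(n)}:=s_1\cos(k\alpha_n)+s_2\cos(k\beta_n)$ and $s_k^{(n)}:=s_1\sin(k\alpha_n)+s_2\sin(k\beta_n)$. By independence of all coefficients, the joint characteristic function factorizes,
\begin{equation*}
\Phi_n(s_1,s_2)=\EXP{\I(s_1+s_2)u}\prod_{k=1}^n\phi\!\left(\frac{c_k^{(n)}}{\sqrt n}\right)\phi\!\left(\frac{s_k^{(n)}}{\sqrt n}\right).
\end{equation*}
Since $|c_k^{(n)}|,|s_k^{(n)}|\leq |s_1|+|s_2|$, for $n$ large enough both arguments lie in the neighborhood of $0$ where \eqref{eq:charfuncchar} is valid, and I can replace each factor by $\exp(-(c_k^{(n)})^2 H(c_k^{(n)}/\sqrt n)/(2n))$, and similarly for $s_k^{(n)}$.

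Third, taking the logarithm reduces the problem to
\begin{equation*}
\log\Phi_n(s_1,s_2)=\I(s_1+s_2)u-\frac{1}{2n}\sum_{k=1}^n\!\bigl[(c_k^{(n)})^2 H(c_k^{(n)}/\sqrt n)+(s_k^{(n)})^2 H(s_k^{(n)}/\sqrt n)\bigr].
\end{equation*}
Using continuity of $H$ at $0$ with $H(0)=1$ and the uniform bound on $c_k^{(n)}, s_k^{(n)}$, the factors $H(\cdot/\sqrt n)$ tend to $1$ uniformly in $k$; combined with $(c_k^{(n)})^2+(s_k^{(n)})^2=s_1^2+s_2^2+2s_1s_2\cos(k\delta/n)$ and the Riemann sum computation from the first step, this yields
\begin{equation*}
\log\Phi_n(s_1,s_2)\longrightarrow \I(s_1+s_2)u-\frac{1}{2}\!\left(s_1^2+s_2^2+2s_1s_2\frac{\sin\delta}{\delta}\right),
\end{equation*}
which is precisely the log-characteristic function of $(Z(0),Z(\delta))$. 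L\'evy's continuity theorem then gives the convergence in distribution.

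There is no real obstacle here; the only point requiring a bit of care is justifying the transition from the product of $\phi$'s to the exponential sum uniformly in $k$, which is handled by the boundedness of $c_k^{(n)}, s_k^{(n)}$ and continuity of $H$. An alternative route via Cram\'er--Wold and the Lindeberg CLT would work equally well (the Lindeberg condition follows from $|A_k c_k^{(n)}+B_k s_k^{(n)}|/\sqrt n\leq(|s_1|+|s_2|)(|A_k|+|B_k|)/\sqrt n$), but the characteristic-function route fits the toolbox already developed in the paper.
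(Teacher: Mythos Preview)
Your proposal is correct and follows essentially the same route as the paper: both prove pointwise convergence of the joint characteristic function by factoring it as a product of $\phi$'s, invoking the local expansion~\eqref{eq:charfuncchar}, using uniform convergence $H(\cdot/\sqrt{n})\to 1$ from the bound $|c_k^{(n)}|,|s_k^{(n)}|\leq |s_1|+|s_2|$, and finishing with the Riemann sum $\frac{1}{n}\sum_{k=1}^n\cos(k\delta/n)\to\delta^{-1}\sin\delta$. Your use of the identity $(c_k^{(n)})^2+(s_k^{(n)})^2=s_1^2+s_2^2+2s_1s_2\cos(k\delta/n)$ is in fact a slight streamlining of the paper's argument, which instead carries the factor $H_1(n,k)$ through and isolates a remainder $R_n$ involving $H_2-H_1$.
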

\begin{proof}
To prove the statement it suffices to show the pointwise convergence of the corresponding characteristic functions. Let
\begin{equation*}
\phi_n(\lambda,\mu):=\E \e^{\I(\lambda X_n(\alpha_n)+\mu X_n(\beta_n))}
\end{equation*}
denote the characteristic function of $(X_n(\alpha_n),X_n(\beta_n))$. Recall that $\phi$ represents the common characteristic function of the coefficients $(A_k)_{k\in\N}$ and $(B_k)_{k\in\N}$. Then the expression reads
\begin{align*}
&\phi_n(\lambda,\mu)= \\
&\e^{\I u(\lambda+\mu)}
\prod_{k=1}^n
\phi\left(\frac{\lambda \cos(k\alpha_n)+\mu\cos(k\beta_n)}{\sqrt{n}}\right)
\phi\left(\frac{\lambda \sin(k\alpha_n)+\mu\sin(k\beta_n)}{\sqrt{n}}\right).
\end{align*}
Using \eqref{eq:charfuncchar} we have
\begin{align*}
\phi_n(\lambda,\mu)&=\e^{-S_n(\lambda,\mu)}, \\
S_n(\lambda, \mu)&:=
-\I u(\lambda+\mu)+\frac{1}{2n}\sum_{k=1}^n (\lambda\cos(k\alpha_n)+\mu\cos(k\beta_n ))^2H_1(n,k)\\
&+\frac{1}{2n}\sum_{k=1}^n (\lambda\sin(k\alpha_n)+\mu\sin(k\beta_n))^2H_2(n,k),
\end{align*}
where we have shortened the writing by defining
\begin{align*}
H_1(n,k)&:=H\left(\frac{\lambda\cos(k\alpha_n)+\mu\cos(k\beta_n)}{\sqrt{n}}\right),\\
H_2(n,k)&:=H\left(\frac{\lambda\sin(k\alpha_n)+\mu\sin(k\beta_n)}{\sqrt{n}}\right).
\end{align*}
After elementary transformations and using that $\beta_n-\alpha_n=n^{-1}\delta$ we obtain
\begin{align*}
& S_n(\lambda, \mu)= \\
&-\I u(\lambda+\mu) +\frac{1}{n}\sum_{k=1}^n H_1(n,k)
\left(
\frac{\lambda^2}{2}+\frac{\mu^2}{2}+\lambda\mu
\cos\left(k\frac{\delta}{n}\right)
\right)
+R_n(\lambda,\mu),
\end{align*}
where we have abbreviated
\begin{equation*}
R_n(\lambda,\mu):=\frac{1}{2n}\sum_{k=1}^n(\lambda\sin(k\alpha_n)+\mu\sin(k\beta_n ))^2(H_2(n,k)-H_1(n,k)).
\end{equation*}
Since Riemann sums converge to Riemann integrals, we have
$$
\lim_{n\to\infty} \frac{1}{n}\sum_{k=1}^n 
\left(
\frac{\lambda^2}{2}+\frac{\mu^2}{2}+\lambda\mu
\cos\left(k\frac{\delta}{n}\right)
\right)
=
\frac{\lambda^2}{2}+\frac{\mu^2}{2}+\lambda\mu \frac{\sin \delta}{\delta}. 
$$
For $i=1,2$ we have that $\lim_{n\to\infty} H_i(n,k) = H(0)=1$ uniformly in $k=1,2,\dots, n$. Hence,
\begin{equation*}
\left|\frac{1}{n}\sum_{k=1}^n (H_1(n,k)-1)\left(
\frac{\lambda^2}{2}+\frac{\mu^2}{2}+\lambda\mu
\cos\left(k\frac{\delta}{n}\right)
\right)\right|
\leq 
\frac{C}{n}\sum_{k=1}^n |H_1(n,k)-1|
\longrightarrow 0
\end{equation*}
as $n\to\infty$. The remaining term of the sum
\begin{equation*}
 |R_n(\lambda,\mu)|\leq \frac{1}{2n}\sum_{k=1}^nC|H_2(n,k)-H_1(n,k)|
\longrightarrow 0 
\end{equation*}
goes to $0$ for all fixed $\lambda,\mu$, as $n\to \infty$. Therefore we have
\begin{equation}\label{eq:S_infty}
S_\infty(\lambda,\mu):=\lim_{n\rightarrow\infty} S_n(\lambda, \mu)=
-\I u(\lambda+\mu)+
 \frac{\lambda^2+\mu^2}{2} + \lambda\mu \frac{\sin(\delta)}{\delta}
\end{equation}
and $\phi_\infty(\lambda,\mu):= \EXP{-S_\infty(\lambda,\mu)}$ is nothing but the characteristic function of $(Z(0),Z(\delta))$. This implies the statement.
\end{proof}

\subsection{The Gaussian case}
Denote by $\tilde N^*[\alpha,\beta]$ the analogue of $N_n^*[\alpha, \beta]$ for the process $Z$, that is
\begin{equation}\label{eq:definitionnstern}
\tilde N^*[\alpha,\beta]:=\frac{1}{2}-\frac{1}{2}\sgn(Z(\alpha)Z(\beta)).
\end{equation}
\begin{lemma}\label{lem:gaussian}
As $\delta\downarrow 0$, we have
\begin{equation}\label{eq:gaussian_crossing_probab}
\E \tilde{N}^*[0,\delta]= \frac{1}{\pi\sqrt{3}}\EXP{-\frac{u^2}{2}}\delta+o(\delta).
\end{equation}
\end{lemma}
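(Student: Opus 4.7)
The plan is to evaluate $\E\tilde N^*[0,\delta]$ directly as a bivariate Gaussian orthant probability. Since the correlation $\rho_\delta:=\sin\delta/\delta$ lies in $(-1,1)$ for $\delta\in(0,2\pi)$, the vector $(Z(0),Z(\delta))$ is non-degenerate, so $\P{Z(0)Z(\delta)=0}=0$ and
$$
\E\tilde N^*[0,\delta]=\P{Z(0)>0>Z(\delta)}+\P{Z(\delta)>0>Z(0)}.
$$

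Next I would represent $Z(0)=u+\xi_1$ and $Z(\delta)=u+\rho_\delta\xi_1+\sigma_\delta\xi_2$ with $\xi_1,\xi_2$ independent standard normals and $\sigma_\delta:=\sqrt{1-\rho_\delta^2}$, and condition on $\xi_1$. Denoting by $\Phi$ the standard normal distribution function, the first probability becomes
$$
\P{Z(0)>0>Z(\delta)}=\int_{-u}^\infty \frac{\EXP{-x^2/2}}{\sqrt{2\pi}}\,\Phi\!\left(\frac{-u-\rho_\delta x}{\sigma_\delta}\right)\d x.
$$
The sign change essentially happens on the $O(\sigma_\delta)$-scale around $x=-u$, so I would rescale via $x=-u+\sigma_\delta t$ and then let $\delta\downarrow 0$ using the asymptotics $\rho_\delta\to1$, $\sigma_\delta\to0$, and $(1-\rho_\delta)/\sigma_\delta=\sqrt{(1-\rho_\delta)/(1+\rho_\delta)}\to 0$. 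Pointwise the integrand converges to $\frac{1}{\sqrt{2\pi}}\EXP{-u^2/2}\Phi(-t)$, and dominated convergence is justified by the simple bound involving $\Phi(|u|-t/2)$, which is valid once $\rho_\delta\geq 1/2$.

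Evaluating $\int_0^\infty\Phi(-t)\d t$ by a single integration by parts yields $1/\sqrt{2\pi}$, so
$$
\P{Z(0)>0>Z(\delta)}=\sigma_\delta\cdot\frac{\EXP{-u^2/2}}{2\pi}+o(\sigma_\delta).
$$
A symmetric argument (interchanging the roles of $Z(0)$ and $Z(\delta)$) gives the same leading term for $\P{Z(\delta)>0>Z(0)}$, so adding them yields $\frac{\sigma_\delta}{\pi}\EXP{-u^2/2}+o(\sigma_\delta)$. Finally, the Taylor expansion $\rho_\delta=1-\delta^2/6+O(\delta^4)$ gives $\sigma_\delta=\delta/\sqrt 3+O(\delta^3)$, which produces exactly \eqref{eq:gaussian_crossing_probab}. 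The computation is essentially routine; the only point requiring genuine care is identifying the correct rescaling (which pinpoints the $\delta/\sqrt 3$ constant) and choosing a dominating function to push the limit through the integral.
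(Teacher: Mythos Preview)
Your argument is correct and follows essentially the same route as the paper. The paper isolates the orthant-probability asymptotics as a separate lemma (Lemma~\ref{lemma:levelcrossings}) and works directly with the two-dimensional density via the substitution $x=u-\epsilon v$, $y=u+\epsilon w$ with $\epsilon=\sqrt{1-\rho^2}$, whereas you condition on one coordinate and reduce to a one-dimensional integral; but the key rescaling by $\sigma_\delta=\sqrt{1-\rho_\delta^2}$, the use of dominated convergence, and the final Taylor expansion are the same.
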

\begin{proof}
The bivariate random vector $(Z(0),Z(\delta))$ is normal-distributed with mean $(u,u)$ and covariance $\rho=\delta^{-1}\sin \delta$. We have
\begin{align*}
\E \tilde{N}^*[0,\delta]
&=\P{Z(0)Z(\delta)<0}\\
&=2\P{Z(0)-u < -u, Z(\delta)-u>-u}\\
&\sim\frac{\sqrt{1-\rho^2}}{\pi}\EXP{-\frac{u^2}{2}}
\end{align*}
as $\delta\downarrow 0$ (equivalently, $\rho\uparrow 1$), where the last step will be justified in  Lemma~\ref{lemma:levelcrossings}, below.
Using the Taylor series of $\rho^{-1}\sin \rho$ which is given by
\begin{equation}
\frac{\sin(\delta)}{\delta} = 1-\frac{\delta^2}{6}+o(\delta^2) \quad \textrm{as }
\delta \downarrow 0,
\end{equation}
we obtain the required relation~\eqref{eq:gaussian_crossing_probab}.
\end{proof}

\begin{lemma}\label{lemma:levelcrossings} Let $(X,Y)\sim N(\mu,\Sigma)$ be bivariate normal distributed with parameters
\begin{equation*}
\mu=\begin{pmatrix}
0 \\
0
\end{pmatrix}
 \quad \textrm{und} \quad
\Sigma=\begin{pmatrix}
1 & \rho \\
\rho & 1
\end{pmatrix}.
\end{equation*}
Let $u\in \R$ be arbitrary but fixed. Then,
\begin{equation*}
\P{X\leq u,Y\geq u}\sim \frac{\sqrt{1-\rho^2}}{2\pi}\EXP{-\frac{u^2}{2}}
\qquad \textrm{as } \rho \uparrow 1.
\end{equation*}
\end{lemma}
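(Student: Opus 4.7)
The plan is to reduce the probability to a single integral in which $\sqrt{1-\rho^2}$ factors out explicitly, and then apply dominated convergence to identify the limit.

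First, I decompose $Y = \rho X + \sqrt{1-\rho^2}\,Z$ where $Z \sim N(0,1)$ is independent of $X \sim N(0,1)$. Conditioning on $X$ gives
\begin{equation*}
\P{X\leq u,\,Y\geq u}
= \int_{-\infty}^{u} \phi(x)\,\bar\Phi\!\left(\frac{u-\rho x}{\sqrt{1-\rho^2}}\right) \D x,
\end{equation*}
where $\phi$ is the standard normal density and $\bar\Phi(s) = \P{Z\geq s}$. Next I perform the substitution $x = u - t\sqrt{1-\rho^2}$, $t \geq 0$, which turns the factor of $\sqrt{1-\rho^2}$ into a multiplicative prefactor. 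A short computation shows
\begin{equation*}
\frac{u-\rho x}{\sqrt{1-\rho^2}} = u\sqrt{\tfrac{1-\rho}{1+\rho}} + \rho t,
\end{equation*}
so that
\begin{equation*}
\P{X\leq u,\,Y\geq u}
= \sqrt{1-\rho^2}\,\int_0^{\infty} \phi\!\left(u - t\sqrt{1-\rho^2}\right)\,
\bar\Phi\!\left(u\sqrt{\tfrac{1-\rho}{1+\rho}} + \rho t\right) \D t.
\end{equation*}

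It remains to show that the integral converges, as $\rho\uparrow 1$, to $\phi(u)\int_0^\infty\bar\Phi(t)\D t$. The integrand converges pointwise to $\phi(u)\bar\Phi(t)$, and the main task is to justify interchanging limit and integral. For this I bound the integrand uniformly in $\rho\in[1/2,1)$: $\phi$ is globally bounded by $(2\pi)^{-1/2}$, and $\bar\Phi\!\left(u\sqrt{(1-\rho)/(1+\rho)} + \rho t\right) \leq \bar\Phi(t/2 - |u|)$ for $\rho$ sufficiently close to $1$, which is integrable on $[0,\infty)$ since $\bar\Phi$ has Gaussian tails. Dominated convergence then yields
\begin{equation*}
\lim_{\rho\uparrow 1} \frac{\P{X\leq u,\,Y\geq u}}{\sqrt{1-\rho^2}}
= \phi(u)\int_0^{\infty}\bar\Phi(t)\D t
= \phi(u)\,\E[Z^+] = \frac{1}{\sqrt{2\pi}}\cdot\frac{1}{\sqrt{2\pi}}\EXP{-\tfrac{u^2}{2}} = \frac{1}{2\pi}\EXP{-\tfrac{u^2}{2}},
\end{equation*}
which is precisely the claim.

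There is no real obstacle here: once the right substitution is made, the $\sqrt{1-\rho^2}$ prefactor appears automatically and the remaining limit is elementary. The only mildly technical point is the uniform domination of $\bar\Phi$ near $\rho=1$, but the Gaussian tail makes this routine. One could alternatively expand the bivariate normal density in the integral definition and perform the same substitution $y = u + s\sqrt{1-\rho^2}$, $x = u - t\sqrt{1-\rho^2}$, which diagonalizes the quadratic form up to a correction vanishing as $\rho\to 1$; this gives the same constant $\frac{1}{2\pi}\EXP{-u^2/2}$ directly from a Gaussian integral over the positive quadrant.
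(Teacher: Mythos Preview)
Your proof is correct and follows the same mechanism as the paper's: substitute to factor out $\sqrt{1-\rho^2}$ explicitly, then pass to the limit by dominated convergence. The paper works directly with the two-dimensional integral against the bivariate normal density and substitutes $x=u-\epsilon v$, $y=u+\epsilon w$ with $\epsilon=\sqrt{1-\rho^2}$, after which the remaining integral over the positive quadrant tends to $\int_0^\infty\int_0^\infty \exp\bigl(-\tfrac12(v+w)^2\bigr)\,\d v\,\d w=1$. Your conditioning step $Y=\rho X+\sqrt{1-\rho^2}\,Z$ collapses this to a one-dimensional integral before substituting, which is a mild simplification; you even sketch the paper's two-variable route as your alternative at the end.
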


\begin{proof}
In the special case $u=0$ the lemma could be deduced from the explicit formula
$$
\P{X\geq 0,Y\geq 0} = \frac 14 + \frac{\arcsin \rho}{2\pi}
$$
due to F.\ Sheppard; see~\cite{bingham_doney} and the references therein. For general $u$, no similar formula seems to exist and we need a different method.

By the formula for the density of the random vector $(X,Y)$, we have to investigate the integral
\begin{equation*}
\int_{x\leq u} \int_{y\geq u} \frac{1}{2\pi\sqrt{(1-\rho^2)}}
\exp\left(-\frac{1}{2(1-\rho^2)}(x^2+y^2-2\rho xy)\right)\ \d x \d y
\end{equation*}
as $\rho\rightarrow 1$.
After the substitution $x=u-\epsilon v$ and $y=u+\epsilon w$ with a parameter $\epsilon>0$ to be chosen below, the integral becomes
\begin{align*}
&\frac{\epsilon^2}{2\pi\sqrt{1-\rho^2}}\exp\left(-\frac{u^2}{1+\rho}\right) \\
&\quad \times \INT{0}{\infty}\INT{0}{\infty} \EXP{
\frac{u\epsilon}{1+\rho}(v-w) -\frac{\epsilon^2}{2(1-\rho^2)}(v^2+w^2+2\rho vw)
}\mathrm{d}v\mathrm{d}w.
\end{align*}
Setting $\epsilon:=\sqrt{1-\rho^2}$ we have $\epsilon\rightarrow 0$ for $\rho\rightarrow 1$ and furthermore (using the dominated convergence theorem)
\begin{align*}
\P{X\leq u,Y\geq u}&\sim \frac{\sqrt{1-\rho^2}}{2\pi}\exp\left(-\frac{u^2}{2}\right)
\INT{0}{\infty}\INT{0}{\infty}
\exp\left(-\frac{1}{2}(v+w)^2\right)\mathrm{d}v\mathrm{d}w \\
&=\frac{\sqrt{1-\rho^2}}{2\pi}\exp\left(-\frac{u^2}{2}\right) \quad \textrm{as }\rho \rightarrow 1,
\end{align*}
where we have used that
\begin{equation*}
\INT{0}{\infty}\INT{0}{\infty} \exp\left(-\frac{1}{2}(v+w)^2\right)\D v\d w =
\INT{0}{\infty}z\EXP{-\frac{1}{2}z^2}\D z =1.
\end{equation*}
\end{proof}

\section{Proof of the main result}
\subsection{Approximation by a lattice}
Fix an interval $[a,b]\subset [0,2\pi]$ and take some $\delta>0$. We will study the sign changes of $X_n$ on the lattice $\delta n^{-1} \Z$. Unfortunately, the endpoints of the interval $[a,b]$ need not be elements of this lattice.
To avoid boundary effects, we notice that $[a_n', b_n']\subset [a,b] \subset [a_n,b_n]$ with
\begin{equation*}
a_n:=\frac{\delta}{n}\left\lfloor \frac{an}{\delta} \right\rfloor,
\quad
b_n:=\frac{\delta}{n}\left\lceil \frac{bn}{\delta} \right\rceil, 
\quad
a'_n:=\frac{\delta}{n}\left\lceil \frac{an}{\delta} \right\rceil,
\quad
b'_n:=\frac{\delta}{n}\left\lfloor \frac{bn}{\delta} \right\rfloor.
\end{equation*}
Since $N_n[a_n',b_n']\leq N_n[a,b]\leq N_n[a_n,b_n]$, it suffices to show that 
$$
\lim_{n\to\infty} \frac{\E N_n[a_n,b_n]}{n} = \lim_{n\to\infty} \frac{\E N_n[a_n',b_n']}{n} =\frac{b-a}{\pi\sqrt{3}}\EXP{-\frac{u^2}{2}}.
$$
In the following, we compute the first limit because the second one is completely analogous.

Let $N^*_{n,\delta}[a_n,b_n]$ be a random variable counting the number of sign changes of $X_n$ on the lattice $\delta n^{-1}\Z$ between $a_n$ and $b_n$, namely
\begin{equation*}
N^*_{n,\delta}[a_n,b_n]:=\sum_{k=\lfloor \delta^{-1}an\rfloor}^{\lceil\delta^{-1}bn\rceil-1}
N_n^*\left[\frac{k\delta}{n},\frac{(k+1)\delta}{n}\right].
\end{equation*}
The following lemma claims that the expected difference between the number of roots and the number of sign changes is asymptotically small.
\begin{lemma}\label{lemma:hauptbeweiszwei}
It holds that
\begin{equation*}
\lim_{\delta\downarrow 0}\limsup_{n\to\infty} \frac{\E N_{n}[a_n,b_n] - \E N^*_{n,\delta}[a_n,b_n]}{n}=0.
\end{equation*}
\end{lemma}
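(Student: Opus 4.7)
The strategy is the obvious one: decompose $[a_n,b_n]$ into the subintervals of the lattice $\delta n^{-1}\Z$ that appear in the definition of $N^*_{n,\delta}[a_n,b_n]$, and apply Lemma~\ref{lemma:ewuntnsternn} to each one. By construction of $a_n$ and $b_n$, the interval $[a_n,b_n]$ is exactly tiled by the subintervals $I_k:=[k\delta/n,(k+1)\delta/n]$ for $k=\lfloor\delta^{-1}an\rfloor,\ldots,\lceil\delta^{-1}bn\rceil-1$. Because a root at an interior lattice point is counted with weight $1/2$ by each of the two adjacent $N_n[I_k]$ (so with total weight $1$), and roots at the outer endpoints $a_n$, $b_n$ contribute with weight $1/2$ on each side of the identity, the additivity
\[
N_n[a_n,b_n] = \sum_k N_n[I_k], \qquad N^*_{n,\delta}[a_n,b_n] = \sum_k N_n^*[I_k]
\]
holds; this matches the convention used to set up Lemma~\ref{lemma:ewuntnsternn}.

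Fix $\delta\in(0,1/2)$ and let $n\geq n_0(\delta)$ as in Lemma~\ref{lemma:ewuntnsternn}. Since each $I_k$ has length $\delta/n$, that lemma provides
\[
0\leq \E N_n[I_k] - \E N_n^*[I_k] \leq C(\delta^{4/3} + \delta^{-7} n^{-1/4})
\]
with the same constant $C$ for every $k$. The number of subintervals is $\lceil\delta^{-1}bn\rceil-\lfloor\delta^{-1}an\rfloor\leq (b-a)n/\delta + 2$. Summing the per-interval estimate therefore gives
\[
0\leq \E N_n[a_n,b_n] - \E N^*_{n,\delta}[a_n,b_n] \leq \left(\frac{(b-a)n}{\delta}+2\right) C(\delta^{4/3}+\delta^{-7}n^{-1/4}).
\]

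Dividing by $n$ yields
\[
\frac{\E N_n[a_n,b_n] - \E N^*_{n,\delta}[a_n,b_n]}{n} \leq C(b-a)\bigl(\delta^{1/3} + \delta^{-8} n^{-1/4}\bigr) + O(n^{-1}).
\]
For fixed $\delta>0$, the terms $\delta^{-8}n^{-1/4}$ and $O(n^{-1})$ vanish as $n\to\infty$, so
\[
\limsup_{n\to\infty}\frac{\E N_n[a_n,b_n] - \E N^*_{n,\delta}[a_n,b_n]}{n} \leq C(b-a)\delta^{1/3}.
\]
Letting $\delta\downarrow 0$ gives the claim. The only real content is the per-interval estimate from Lemma~\ref{lemma:ewuntnsternn}; everything else is additivity and a routine passage to the limit. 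There is no genuine obstacle once one notices that the exponent $4/3$ of $\delta$ in that lemma is strictly greater than $1$, which is precisely what guarantees that the cumulative error $(b-a)\delta^{1/3}$ tends to zero as $\delta\downarrow 0$.
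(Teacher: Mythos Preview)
Your proof is correct and follows essentially the same approach as the paper: decompose $[a_n,b_n]$ into the lattice subintervals, apply Lemma~\ref{lemma:ewuntnsternn} to each, sum, divide by $n$, and then pass to the iterated limit using that the exponent $4/3>1$. The paper's version is slightly terser (it absorbs the interval count into a single factor $Cn/\delta$ and does not spell out the additivity of $N_n$), but the argument is the same.
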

\begin{proof}
The triangle inequality and Lemma \ref{lemma:ewuntnsternn} yield that for any $\delta\in (0,1/2)$ and all sufficiently large $n$, 
\begin{align*}
&\left|\E N_n[a_n,b_n]-\E N^*_{n,\delta}[a_n,b_n]\right| \\
=&\quad
\left|\sum_{k=\lfloor \delta^{-1}an\rfloor}^{\lceil\delta^{-1}bn\rceil-1}
\left(\E N_n\left[\frac{k\delta}{n},\frac{(k+1)\delta}{n}\right] -
\E N^*_n\left[\frac{k\delta}{n},\frac{(k+1)\delta}{n}\right]\right)\right| \\
\leq&\quad
\sum_{k=\lfloor \delta^{-1}an\rfloor}^{\lceil\delta^{-1}bn\rceil-1}
\left|\E N_n\left[\frac{k\delta}{n},\frac{(k+1)\delta}{n}\right] -
\E N^*_n\left[\frac{k\delta}{n},\frac{(k+1)\delta}{n}\right]\right| \\
\leq &\quad
C\frac{n}{\delta}\left(\delta^{4/3}+\delta^{-7}n^{-1/4}\right).
\end{align*}
It follows that for every fixed $\delta\in (0,1/2)$,
\begin{equation*}
\limsup_{n\to\infty} \frac{\left|\E N_n[a_n,b_n]-\E N^*_{n,\delta}[a_n,b_n]\right|}{n}
\leq C\delta^{1/3}.
\end{equation*}
Letting  $\delta \to 0$ completes the proof.
\end{proof}

\subsection{Sign changes over the lattice}
In the next lemma we find the asymptotic  number of sign changes of $X_n$ over a lattice with mesh size $\delta n^{-1}$, as $n\to\infty$ and then $\delta\downarrow 0$. 
\begin{lemma}\label{lemma:hauptbeweisdrei}
It holds that
\begin{equation*}
\lim_{\delta\downarrow 0} \lim_{n\to\infty}
\frac{\E N^*_{n,\delta}[a_n,b_n]}{n}=\frac{b-a}{\pi\sqrt{3}}\EXP{-\frac{u^2}{2}}.
\end{equation*}
\end{lemma}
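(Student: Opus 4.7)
The plan is to evaluate the inner limit $n\to\infty$ first for fixed $\delta>0$, reducing the expected number of sign changes over the lattice to the Gaussian crossing probability $\E \tilde N^*[0,\delta]$, and then to apply Lemma~\ref{lem:gaussian} as $\delta\downarrow 0$.

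First I would establish the following pointwise statement: for any sequence of intervals $[\alpha_n,\beta_n]\subseteq[0,2\pi]$ with $\beta_n-\alpha_n=n^{-1}\delta$,
\begin{equation*}
\lim_{n\to\infty}\E N_n^*[\alpha_n,\beta_n]=\E\tilde N^*[0,\delta].
\end{equation*}
This follows from Lemma~\ref{lem:lim_S_n} together with the continuous mapping theorem: the map $(x,y)\mapsto \tfrac12-\tfrac12\sgn(xy)$ is bounded and continuous away from $\{xy=0\}$, and since $(Z(0),Z(\delta))$ is non-degenerate bivariate Gaussian (the correlation $\sin\delta/\delta$ lies strictly in $(-1,1)$ for $\delta>0$), we have $\P{Z(0)Z(\delta)=0}=0$. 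Because $N_n^*\in\{0,\tfrac12,1\}$ is bounded, convergence in distribution upgrades to convergence of expectations.

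Next I would upgrade this to \emph{uniform} convergence over all admissible choices of $\alpha_n$, that is,
\begin{equation*}
\sup_{[\alpha,\beta]\subseteq[0,2\pi],\,\beta-\alpha=\delta/n}\bigl|\E N_n^*[\alpha,\beta]-\E\tilde N^*[0,\delta]\bigr|\ \xrightarrow[n\to\infty]{}\ 0.
\end{equation*}
If this failed, one could extract a subsequence $n_j$ and intervals $[\alpha_{n_j},\beta_{n_j}]$ of length $\delta/n_j$ violating the bound, contradicting the pointwise statement just proved (which applies to arbitrary such sequences, since Lemma~\ref{lem:lim_S_n} is stated with no restriction on $\alpha_n$ beyond the length condition). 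This uniformity is the main delicate point, but the abstract subsequence argument sidesteps any need to inspect constants in the proof of Lemma~\ref{lem:lim_S_n}.

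Applying this uniform estimate to each summand in the definition of $N^*_{n,\delta}[a_n,b_n]$, and noting that the number of lattice intervals is $|K_n|:=\lceil\delta^{-1}bn\rceil-\lfloor\delta^{-1}an\rfloor$ with $|K_n|/n\to(b-a)/\delta$, one gets
\begin{equation*}
\frac{\E N^*_{n,\delta}[a_n,b_n]}{n}=\frac{|K_n|}{n}\,\bigl(\E\tilde N^*[0,\delta]+o(1)\bigr)\ \xrightarrow[n\to\infty]{}\ \frac{b-a}{\delta}\,\E\tilde N^*[0,\delta].
\end{equation*}
Finally, Lemma~\ref{lem:gaussian} gives $\E\tilde N^*[0,\delta]=\tfrac{\delta}{\pi\sqrt 3}\exp(-u^2/2)+o(\delta)$ as $\delta\downarrow 0$, so
\begin{equation*}
\lim_{\delta\downarrow 0}\frac{b-a}{\delta}\E\tilde N^*[0,\delta]=\frac{b-a}{\pi\sqrt 3}\exp\!\Bigl(-\tfrac{u^2}{2}\Bigr),
\end{equation*}
which is the asserted double limit.
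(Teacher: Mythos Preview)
Your argument is correct and follows the same overall strategy as the paper: establish convergence of $\E N_n^*[\alpha_n,\beta_n]$ to the Gaussian crossing probability via Lemma~\ref{lem:lim_S_n} and the continuous mapping theorem, sum over the lattice intervals, and finish with Lemma~\ref{lem:gaussian}.

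The one technical difference lies in how the sum over $\sim (b-a)n/\delta$ lattice intervals is controlled. You upgrade the per-interval convergence to a \emph{uniform} statement via a subsequence argument (valid, since Lemma~\ref{lem:lim_S_n} indeed places no restriction on $\alpha_n$ beyond the length condition, so a bad subsequence can be completed to a full sequence contradicting that lemma) and then estimate the sum directly. The paper instead rewrites the sum as $\delta^{-1}\int_{a_n}^{b_n} f_n(x)\,\mathrm{d}x$ for a bounded step function $f_n$, shows $f_n(x)\to\P{Z(0)Z(\delta)\leq 0}$ pointwise, and invokes dominated convergence. The two devices are essentially equivalent here: your subsequence argument extracts precisely the content that makes the paper's pointwise-plus-DCT step work, just packaged differently. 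The integral/DCT route is perhaps slightly slicker in that it never needs to name uniformity, while your route makes the mechanism more explicit.
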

\begin{proof}
Fix $0< \delta\leq 1$.
For every $x\in\R$ we can find a unique integer $k=k(x;n,\delta)$ such that $x\in (n^{-1}\delta k,n^{-1}\delta (k+1)]$. Thus the function $f_n: \R\rightarrow [0,1]$ given by
\begin{equation*}
f_n(x):=\E N_n^*\left[\frac{k\delta}{n},\frac{(k+1)\delta}{n}\right] \quad
\textrm{for} \quad \frac{k\delta}{n}<x\leq \frac{(k+1)\delta}{n}
\end{equation*}
is well-defined for all $n\in\N$. Now $\E N^*_{n,\delta}[a_n,b_n]$ can be expressed as
\begin{equation*}
\frac{\E N_{n,\delta}^*[a_n,b_n]}{n}=\frac{1}{\delta}\INT{a_n}{b_n}f_n(x)\D x.
\end{equation*}
Recall that $(Z(t))_{t\in\R}$ denotes the stationary Gaussian process with mean $\E Z(t)=u$ and covariance
\begin{equation*}
\Cov\left[Z(t),Z(s)\right]=\frac{\sin(t-s)}{t-s}.
\end{equation*}
We want to show that for all $x\in \R$,
\begin{equation}\label{eq:konvergenz}
\lim_{n\to\infty} f_n(x)=\P{Z(0)Z(\delta)\leq 0}.
\end{equation}
Write $\alpha_n:=k n^{-1}\delta$ and $\beta_n:=(k+1)n^{-1}\delta$, so that $\beta_n-\alpha_n=n^{-1}\delta$.  We obtain from Lemma~\ref{lem:lim_S_n} that
\begin{equation*}
\begin{pmatrix}
X_n(\alpha_n) \\
X_n(\beta_n)
\end{pmatrix}
\to
\begin{pmatrix}
Z(0) \\Z(\delta)
\end{pmatrix}
 \quad \textrm{in distribution as $n\to\infty$.}
\end{equation*}
Now consider the function
\begin{equation*}
h:\R^2\to \{-1,0,1\},\quad h(x,y):=\sgn(x)\sgn(y).
\end{equation*}
Let $D_h\subseteq \R^2$ be the set of discontinuities of $h$, which in this case is the union of the coordinate axes. Since $(Z(0),Z(\delta))^T$ is bivariate normal with unit variances, it follows that
\begin{equation*}
\P{
\begin{pmatrix}
Z(0) \\Z(\delta)
\end{pmatrix}
\in D_h}=0.
\end{equation*}
Using the continuous mapping theorem  (see, e.g., \cite[Theorem 2.7]{billingsley_book}), we conclude that
\begin{equation*}
h(X_n(\alpha_n),X_n(\beta_n)) \to h(Z(0), Z(\delta)) \quad \textrm{in distribution as $n\to\infty$}.
\end{equation*}
Since these random variables are bounded by $1$, it follows that
\begin{equation*}
\lim_{n\to\infty} \EW{\sgn X_n(\alpha_n)\sgn X_n(\beta_n)}=
\EW{\sgn Z(0)\sgn Z(\delta)}.
\end{equation*}
Recalling that
\begin{align*}
f_n(x)=\E N^*_n[\alpha_n,\beta_n]&=\EW{\frac{1}{2}-\frac{1}{2}\sgn X_n(\alpha_n)\sgn X_n(\beta_n)},\\
\P{Z(0)Z(\delta)\leq 0}&=\EW{\frac{1}{2}-\frac{1}{2}\sgn Z(0)\sgn Z(\delta)}
\end{align*}
completes the proof of~\eqref{eq:konvergenz}.

Since $0\leq f_n\leq 1$ for all $n\in \N$, we may use the dominated convergence theorem to obtain
\begin{align*}
\lim_{n\to\infty} \INT{a_n}{b_n} f_n(x)\D x =\INT{a}{b} \P{Z(0)Z(\delta)\leq 0} \D x
=(b-a)\P{Z(0)Z(\delta)\leq 0}.
\end{align*}
Therefore for all $\delta>0$,
\begin{equation*}
\lim_{n\to\infty} \frac{\E N_{n,\delta}^* [a_n,b_n]}{n}=
\frac{\P{Z(0)Z(\delta)\leq 0}}{\delta} (b-a).
\end{equation*}
Due to Lemma \ref{lem:gaussian},
\begin{equation*}
\lim_{\delta\downarrow 0}\frac{\P{Z(0)Z(\delta)\leq 0}}{\delta}=
\EXP{-\frac{u^2}{2}}\frac{1}{\pi\sqrt{3}},
\end{equation*}
whence the statement follows.
\end{proof}
\noindent \emph{Proof of Theorem 1.}
The triangle inequality yields
\begin{align*}
&\quad \left|\frac{\E N_n[a_n,b_n]}{n}-\frac{b-a}{\pi\sqrt{3}}\EXP{-\frac{u^2}{2}}\right| \\
&\leq
\left|\frac{\E N_{n}[a_n,b_n] - \E N^*_{n,\delta}[a_n,b_n]}{n}\right|+
\left|\frac{\E N^*_{n,\delta}[a_n,b_n]}{n}
-\frac{b-a}{\pi\sqrt{3}}\EXP{-\frac{u^2}{2}}\right|.
\end{align*}
Taking first $n$ to infinity and $\delta>0$ to zero afterwards, the first term of the sum on the right-hand side converges to $0$ due to Lemma \ref{lemma:hauptbeweiszwei}, while the second term of the sum converges to $0$ due to Lemma \ref{lemma:hauptbeweisdrei}. This proves that
$$
\lim_{n\to\infty} \frac{\E N_n[a_n,b_n]}{n} = \frac{b-a}{\pi\sqrt{3}}\EXP{-\frac{u^2}{2}}.
$$
Analogous argument shows that $[a_n,b_n]$ can be replaced by $[a_n', b_n']$. This completes the proof. 
\beweisende
\bibliographystyle{alpha}
\bibliography{random_trig}
\end{document}